\numberwithin{equation}{section}
\newcommand{\MH}{\mathrm{Mult}(\mathcal{H})}
\newcommand{\HH}{\mathcal{H}}
\newcommand{\HK}{\mathcal{K}}
\newcommand{\HE}{\mathcal{E}}
\newcommand{\HB}{\mathcal{B}}
\newcommand{\dB}{\partial \mathbb B_d}
\newcommand{\D}{\mathbb{D}}
\newcommand{\B}{\mathbb{B}}
\newcommand{\C}{\mathbb{C}}
\newcommand{\N}{\mathbb{N}}
\newcommand{\la}{\langle}
\newcommand{\ra}{\rangle}
\newcommand{\Bd}{\mathbb B_d}
\newcommand{\Hol}{\operatorname{Hol}}
\newcommand{\Mult}{\mathrm{Mult}}
\def\HE{{\mathcal E}}
\def\om{\omega}
\def\Bd{{\mathbb{B}_d}}
\newcommand{\Ker}[1]{\mathsf{Ker}~}
\def\Section2{ Section 2}
\def\Lemmsixseven{Lemma 6.7 }
\def\Lemmfourtwo{Lemma 4.2}
\def\Theooneeight{Theorem 1.8 }
\def\Theoonesix{Theorem 1.6}
\def\onethree{1.3 }
\def\twooneandtwofive{2.1 and 2.5 }
\DeclareMathOperator{\mim}{Im}
\theoremstyle{plain}
\newtheorem{theorem}{Theorem}[section]
\newtheorem{lemma}[theorem]{Lemma}
\newtheorem*{theo}{Theorem}
\newtheorem{ques}[theorem]{Question}
\theoremstyle{definition}
\begin{document}

\date{\today}

\title[Cyclicity and iterated logarithms]{Cyclicity and iterated logarithms in the Drury-Arveson space}
\author[A. Aleman]{Alexandru Aleman}
\address{Lund University, Mathematics, Faculty of Science, P.O. Box 118, S-221 00 Lund, Sweden}
\email{alexandru.aleman@math.lu.se}

\author[K.-M. Perfekt]{Karl-Mikael Perfekt}
\address{Department of Mathematical Sciences, Norwegian University of Science and Technology (NTNU), 7491 Trondheim, Norway}
\email{karl-mikael.perfekt@ntnu.no}

\author[S. Richter]{Stefan Richter}
\address{Department of Mathematics, University of Tennessee, 1403 Circle Drive, Knoxville, TN 37996-1320, USA}
\email{srichter@utk.edu}

\author[C. Sundberg]{Carl Sundberg}
\address{Department of Mathematics, University of Tennessee, 1403 Circle Drive, Knoxville, TN 37996-1320, USA}
\email{csundber@utk.edu}

\author[J. Sunkes]{James Sunkes}
\address{Huntsville, AL, USA}
\email{jsunkes@gmail.com}

\subjclass[2020]{Primary 47B32, 47A16; Secondary  30H25}
\maketitle
\begin{abstract}
Let $H^2_d$ be the Drury-Arveson space, and let $f\in H^2_d$ have bounded argument and no zeros in $\Bd$. We  show that $f$ is cyclic in $H^2_d$ if and only if $\log f$ belongs to the Pick-Smirnov class $N^+(H^2_d)$.  Furthermore, for non-vanishing functions $f\in H^2_d$ with bounded argument and $H^\infty$-norm less than 1, cyclicity can also be tested via iterated logarithms. For example, we show that $f$ is cyclic if and only if $\log(1+\log (1/f))\in N^+(H^2_d)$. Thus, a sufficient condition for cyclicity is that $\log(1+\log (1/f))\in H^2_d$. More generally, our results hold for all radially weighted Besov spaces that also are complete Pick spaces.
\end{abstract}

\section{Introduction}
For $d\in \N$ let $\Bd$ denote the open unit ball of $\C^d$. Let $\HH$ be a Hilbert space of functions on $\Bd$. We write $\Mult(\HH)=\{\varphi\in \Hol(\Bd): \varphi f\in \HH \text{ for all }f\in \HH\}$ for the multipliers of $\HH$, and we say that $f\in \HH$ is cyclic in $\HH$, if $\{\varphi f: \varphi\in \MH\}$ is dense in $\HH$.

The results of this article apply to radially weighted Besov space $\HH$ that are also complete Pick spaces; see Section 2 for precise definitions. Perhaps the most interesting examples of spaces that fall in both categories are the Dirichlet space of the unit disc $\D=\mathbb{B}_1$
$$D=\{f\in \Hol(\D): \int_\D |f'|^2 dA<\infty\}$$ and the Drury-Arveson space $H^2_d$, which for $d=1$ coincides with the classical Hardy space $H^2(\D)$.
  $H^2_d$ is defined to be the unique Hilbert  space of analytic functions on $\Bd$  that  has reproducing kernel $k_w(z)=\frac{1}{1-\la z,w\ra}$. Alternatively, $H^2_d$ can be defined by saying that an analytic function $f(z)=\sum_{\alpha\in \N_0^d} \hat{f}(\alpha)z^\alpha\in H^2_d$ if
 $$\|f\|^2_{H^2_d} = \sum_{\alpha\in \N_0^d} \frac{\alpha !}{|\alpha|!}|\hat{f}(\alpha)|^2<\infty.$$
As usual, if $\alpha=(\alpha_1,\dots,\alpha_d)\in \N_0^d$ and $z=(z_1,\dots, z_d)\in \C^d$, then $|\alpha|=\alpha_1+\dots +\alpha_d$, $\alpha!=\alpha_1!\cdot \dots \cdot \alpha_d!$, and $z^\alpha=z_1^{\alpha_1}\cdot \dots \cdot z_d^{\alpha_d}$.

The Drury-Arveson space has turned out to be important for multivariable operator theory. Many classical results that are known to be true and useful for the Hardy space $H^2(\D)$ have an extension to $H^2_d$ and many other complete Pick spaces.  A fact about  all complete Pick spaces $\HH$ that will be used repeatedly in this paper is that $\HH$ is contained in the corresponding Pick-Smirnov class
$$N^+(\HH)=\{\varphi/\psi : \varphi, \psi \in \Mult(\HH), \psi \text{ cyclic}\},$$ see \cite{AHMRsccps}.

Now assume that $\HH$ is a radially weighted Besov space that  also is a complete Pick space. In that case using the fact that $\HH\subseteq N^+(\HH)$ it is an elementary observation that $f\in \HH$ is cyclic, if and only if $1/f\in N^+(\HH)$, see Theorem \ref{1overf}. Thus, using that $N^+(\HH)$ is an algebra we see that a sufficient condition for cyclicity of $f$ is $f^{-1/n}\in \HH$ for some positive integer $n$. That makes it reasonable to consider $\log f$.  If $f(z) \ne 0$ for all $z\in \Bd$, then we say that $f$ has bounded argument, if there is $M>0$ such that $| \mim \log f|\le M$ in $\Bd$ for some branch of $\log f$. Of course, for different branches of $\log f$ the condition will still hold, but possibly with a different $M$. For $n\in \N$ define analytic functions $G_n$ on the right half plane by $G_1(z)=z$ and $G_{n+1}(z)=\log(1+G_n(z))$, $n \in \N$.
\begin{theorem}\label{thm1} Let $\HH$ be a radially weighted Besov space that  also is a complete Pick space. If $f\in \HH$ is zero-free on $\Bd$ and has bounded argument, then

(a)  $\log f \in N^+(\HH)$ if and only if $f$ is cyclic in $\HH$,

(b) if also $\|f\|_\infty \le 1$, then the following are equivalent:
\begin{enumerate}
\item $f$ is cyclic in $\HH$,
\item there is $n \in \N$ such that $G_n\circ \log(1/f)\in N^+(\HH)$,
\item for every $n \in \N$,  $G_n\circ \log(1/f)\in N^+(\HH).$
\end{enumerate}
Hence, in case (b), a sufficient condition for cyclicity is that $G_n\circ \log(1/f)\in \HH$ for some $n\in \N$.
\end{theorem}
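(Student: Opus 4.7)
The plan rests on Theorem~\ref{1overf}, which reduces cyclicity of $f\in\HH$ to $1/f\in N^+(\HH)$, together with a functional-calculus ``transfer'' between the $N^+$-memberships of $f$, $1/f$, $\log f$, and the iterated logarithms. The crux will be the following \emph{exponential stability} lemma: if $u\in N^+(\HH)$ has bounded imaginary part, then $e^u\in N^+(\HH)$. I expect its proof to exploit the quotient representation $u=\varphi/\psi$ with $\psi$ cyclic, together with the radially weighted Besov hypothesis, to manufacture an explicit cyclic denominator for $e^u$.

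\textbf{Part (a).} Given exponential stability, the direction $\log f\in N^+(\HH)\Rightarrow f$ cyclic is immediate: the bounded argument hypothesis says $-\log f$ has bounded imaginary part, so the lemma gives $1/f=e^{-\log f}\in N^+(\HH)$, and Theorem~\ref{1overf} finishes. For the converse, assuming $f$ cyclic, I would approximate $\log f$ by $n(f^{1/n}-1)$; since $f$ is zero-free with bounded argument, $f^{1/n}$ is a well-defined analytic branch, and because $f=(f^{1/n})^n$ with cyclic multipliers closed under products, cyclicity of $f$ transfers to each $f^{1/n}$. The functions $n(f^{1/n}-1)$ then lie in $\HH\subseteq N^+(\HH)$ and converge to $\log f$ pointwise. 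The main task is to upgrade this to membership $\log f\in N^+(\HH)$; I expect this to demand the radially weighted Besov structure in the form of a uniform cyclic denominator for the approximants.

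\textbf{Part (b).} I would prove by induction on $n$ that $G_n\circ\log(1/f)\in N^+(\HH)$ if and only if $\log(1/f)\in N^+(\HH)$; combined with part (a) applied to $-\log f=\log(1/f)$, this yields the equivalence of (1), (2), (3). Set $u=\log(1/f)$. Since $\|f\|_\infty\le 1$ we have $\Re u\ge 0$, and a short induction using that $\log$ maps the right half-plane into $|\mathrm{Im}\,z|<\pi/2$ gives $\Re G_k(u)\ge 0$ for all $k\ge 1$ and $|\mathrm{Im}\,G_k(u)|<\pi/2$ for $k\ge 2$. The inductive step reduces to: for holomorphic $v$ on $\Bd$ with $\Re v\ge 0$, $v\in N^+(\HH)\Leftrightarrow\log(1+v)\in N^+(\HH)$. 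The backward direction follows from exponential stability applied to $\log(1+v)$, giving $v=e^{\log(1+v)}-1\in N^+(\HH)$. For the forward direction I would use an integral representation such as $\log(1+v)=\int_0^1 v/(1+tv)\,dt$ together with uniform $N^+$-control of the integrand, which is available because each $1+tv$ has real part bounded below by $1$ and therefore interacts well with cyclic denominators.

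The principal obstacle is the exponential stability lemma; once it is in hand, the remaining equivalences are essentially bookkeeping driven by the half-plane mapping properties of $\log$ and the algebra structure of $N^+(\HH)$. The secondary difficulty---upgrading pointwise convergence of $n(f^{1/n}-1)$ to $N^+$-membership of $\log f$ in part (a)---is more analytic than algebraic, and I expect it to rely on the same Besov machinery that powers the exponential stability lemma.
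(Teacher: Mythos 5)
Your architecture matches the paper's at a high level (reduce cyclicity to $1/f\in N^+(\HH)$ via Theorem~\ref{1overf}, exponentiate logarithms with controlled imaginary part, and induct on $n$ using the half-plane mapping properties of $\log$), but the proposal has genuine gaps rather than omitted routine details. The ``exponential stability lemma'' that you flag as the principal obstacle \emph{is} the theorem: it is precisely the content of the paper's Theorem~\ref{technicalTheo} (for $\varphi,\psi\in\Mult(\HH)$ with $\operatorname{Re}(\varphi/\psi)\ge 0$ one has $\psi^n e^{-\varphi/\psi}\in\Mult(\HH)$, proved by induction on the Besov order via Lemma~\ref{MultiplierInductionStep}(a)) together with Theorem~\ref{logTheo}, whose proof also needs the weak-convergence argument with the uniformly bounded approximants $b_jv\to 1$ from Lemma~\ref{UniformSmirnovLemma} to show that the resulting multiplier $v^nf$ is actually \emph{cyclic}, which is what yields $1/f\in N^+(\HH)$. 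Saying ``I expect its proof to exploit the quotient representation\dots to manufacture an explicit cyclic denominator'' defers exactly this work. Note also that the paper only proves the lemma under the extra hypothesis that $e^{-u}$ is bounded (arranged by first reducing, via Theorem~\ref{SarasonAnalog} and Lemma~\ref{multIndep}, to $f$ a contractive multiplier, so that $\operatorname{Re}\log(1/f)\ge 0$); your version with only ``bounded imaginary part'' is stronger and is not established anywhere.

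Two further steps would fail as written. For the converse of (a), the approximants $n(f^{1/n}-1)$ are problematic: $f^{1/n}$ is not known to belong to $\HH$ (let alone to $\Mult(\HH)$) in a radially weighted Besov space, and pointwise convergence of functions in $N^+(\HH)$ does not yield $N^+$-membership of the limit; no ``uniform cyclic denominator'' for these approximants is in sight. The paper instead observes that for a bounded cyclic multiplier $f$ with bounded argument one has $f\log f\in H^\infty$, whence Lemma~\ref{MultiplierInductionStep}(b) gives $f^n\log f\in\Mult(\HH)$ and $\log f=(f^n\log f)/f^n\in N^+_{ba}(\HH)$. For the forward inductive step in (b), the integral $\log(1+v)=\int_0^1 v/(1+tv)\,dt$ with ``uniform $N^+$-control of the integrand'' is not a valid argument: the denominators $\psi+tu$ vary with $t$ and $N^+(\HH)$ has no structure that lets you integrate. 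The paper's Lemma~\ref{iterate}(b) instead writes $\log(1+u/\psi)=\log(\psi+u)-\log\psi$, uses the modulus-domination cyclicity transfer (Theorem~\ref{Paper1:AbsoluteValuePickIntro}) to see that $\psi+u$ is cyclic with bounded argument, and again invokes Lemma~\ref{MultiplierInductionStep}(b); tracking the bounded-argument property of the denominators (the class $N^+_{ba}$) through the induction is essential and is absent from your plan.
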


We say that a $d$-variable polynomial is stable if it does not have any zeros in $\Bd$. Stable polynomials have bounded argument, see Lemma~6.1. For the Dirichlet space $D$ it is known that all stable polynomials are cyclic \cite{BrownShields}. In Lemma \ref{lem:LogPolyDiri} we will show that $\log(1+\log\frac{2^np(0)}{p(z)})\in D$ for all stable  polynomials of degree $\le n$, that is, the cyclicity can be deduced from the sufficient condition of Theorem \ref{thm1} with $n=2$.

If $d\ge 4$, then it is known that there are stable polynomials that are not cyclic in $H^2_d$ (\cite{CyclicWeighedBesov_I}, {\Section2}). Hence there are nonzero functions $f\in H^2_d$ with bounded argument such that $G_n\circ \log(1/f)\notin N^+(H^2_d)$ for any $n\in \N$. However, if $p$ is a stable polynomial
 that only depends on the variables $z_1$ and $z_2$, then it is known that $p$ is cyclic in $H^2_d$, see \cite{KosinskiVavitsas} and \cite{CyclicWeighedBesov_I}. As in the case of the Dirichlet space, in Section \ref{polys} we will show that the result can also be established by use of the sufficient condition of Theorem \ref{thm1} with $n=2$. Furthermore, in Section \ref{polys} we will  give additional examples of cyclic polynomials $p$ in $H^2_d$ with zeros on $\dB$ that satisfy  $\log (1+\log\frac{c}{p})\in H^2_d$ for an appropriate constant $c$.

\begin{ques} If $f\in H^2_d$ is cyclic in $H^2_d$, has bounded argument, and $\|f\|_\infty \le 1$, then is there $n$ such that $G_n\circ \log(1/f)\in H^2_d$? If so, is $n$ independent of $f$?
\end{ques}

If $f$ is not bounded below, then $G_n\circ \log (1/f)$ will be unbounded, but its growth near $\dB$ will be slower the larger $n$ is. Thus, in a sense, for functions with bounded argument, Theorem \ref{thm1} establishes a connection between cyclicity and the exceptional sets of elements in the class $N^+(\HH)$.  If $\HH=H^2(\D)$ and $f$ is a singular inner function, then $N^+(H^2(\D))$ is the classical Smirnov class, and $f$ is not cyclic in $H^2(\D)$ even though $\log f\in N^+(H^2(\D))$.  This shows that if one simply deletes the condition on the argument of $f$ in Theorem \ref{thm1}, then the theorem would no longer be true. On the other hand, if $f$ is the conformal map from the unit disc onto a cornucopia that is bounded away from its center 0, then $f$ is an outer function (hence cyclic in $H^2(\D)$) that does not have bounded argument. Thus, it would be interesting to know, whether the hypothesis of having bounded argument can be weakened in Theorem \ref{thm1}.
For the Dirichlet space $D$,  in a forthcoming paper, we will establish the sufficiency of the logarithmic condition under a hypothesis where the boundedness of the argument is replaced by $f$ being outer.

This paper is organized as follows. In Section 2 we have collected definitions and known facts that will be important for the proof of Theorem \ref{thm1}. Section 3 contains some elementary facts about functions in complete Pick spaces, while in Section 4 we present crucial lemmas that hold in radially weighted Besov spaces. Theorem \ref{thm1} is proved in Section 5. In Section 6 we verify that certain stable polynomials satisfy the hypothesis of Theorem \ref{thm1}, and Section 7 contains an open question along with a partial result.


\section{Preliminaries}
\subsection{Definitions}
 A Hilbert function space $\HH$ on a set $X$ is a Hilbert space of complex valued functions on $X$ such that for each $z\in X$ the evaluation functional $f\to f(z)$ is continuous on $\HH$. If $\HH$ and $\HK$ are Hilbert function spaces, then we write $$\Mult(\HH,\HK)=\{\varphi: \varphi f\in \HK \text{ for all }f\in \HH\}$$ for the multipliers from $\HH$ to $\HK$. As above we write $\Mult(\HH)=\Mult(\HH,\HH)$. Each multiplier $\varphi\in \Mult(\HH,\HK)$ defines a bounded multiplication operator $M_\varphi: \HH\to \HK, f\to \varphi f$, and $\Mult(\HH,\HK)$ becomes a Banach space by setting $\|\varphi\|_{\Mult(\HH,\HK)}=\|M_\varphi\|_{\HB(\HH,\HK)}$.

Each Hilbert function space $\HH$ has a reproducing kernel $k: X \times X\to \C$. Writing $k_w(z)=k(z,w)$ it  satisfies $f(w)=\la f, k_w\ra$ for all $f\in \HH$, $w\in X$.  A  reproducing kernel $k$ on $X$ is called a normalized complete Pick kernel, if there is $w_0\in X$ and a function $b$ from $X$ into some auxiliary Hilbert space $\HK$ such that $b(w_0)=0$ and
$$k_w(z)= \frac{1}{1-\la b(z),b(w)\ra_\HK}.$$ In the interesting case where $\HH$ is a Hilbert space of analytic functions one easily shows that $\HH$ is separable, and then one may assume that $\HK$ is separable also. As mentioned in the Introduction the Dirichlet space $D$ of the unit disc and the Drury-Arveson space $H^2_d$ are examples of spaces with a normalized complete Pick kernel. Spaces with normalized complete Pick kernel have many of the properties that one may be familiar with from the Hardy space $H^2(\D)$. For this paper the most important property is that $\HH\subseteq N^+(\HH)$, see \cite{AHMRsccps} and also see Theorem \ref{SarasonAnalog} below for further details. Note that  this property is preserved, if the norm on $\HH$ is replaced with an equivalent norm. Thus, we define a complete Pick space $\HH$ to be a Hilbert function space such that there is an equivalent norm on $\HH$ with the property that the reproducing kernel for the equivalent norm is a normalized complete Pick kernel.

Let $d\in \N$. We will use $\Hol(\Bd)$ to denote the analytic functions on $\Bd$. A radially weighted Besov space is a space of the form $$B^N_\om=\{f\in \Hol(\Bd): R^Nf\in L^2(\om)\}.$$ Here $N$ is a non-negative integer, $R=\sum_{j=1}^d z_j \frac{\partial}{\partial z_j}$ is the radial derivative operator, and $\om$ is a measure on $\overline{\Bd}$ of the type $d\om(z)=d\mu(r) d\sigma(w)$, where $z=rw$, $\sigma$ is the normalized rotationally invariant measure on $\dB$, and $\mu$ is a Borel measure on $[0,1]$ with $\mu((r,1])>0$ for each real $r$ with  $0<r<1$. Such measures will be called admissible radial measures. If $\mu$ has a point mass at 1, then the $L^2(\om)$-norm of an analytic function $f$ is to be understood by $\|f\|^2_{L^2(\om)}=\int_{\Bd}|f|^2d\om + \mu(\{1\})\|f\|^2_{H^2(\dB)}$. We define a norm on $B^N_\om$ by
\begin{align}\label{NormBN} \|f\|^2_{B^N_\om}= \left\{\begin{matrix}& \|f\|^2_{L^2(\om)}, &\text{ if }N=0,\\& \om(\Bd)|f(0)|^2+\|R^N f\|^2_{L^2(\om)}, &\text{ if } N>0,\end{matrix}\right.\end{align}
 and we note that the hypothesis on $\mu$ implies that  each $B^N_\om$ is a Hilbert function space on $\Bd$. For later reference we also note that
 \begin{align}\label{NormNandN-1}\|f\|^2_{B^N_\om}=\om(\Bd)|f(0)|^2+\|Rf\|^2_{B^{N-1}_\om}\end{align} holds for all $N>0$.

The Drury-Arveson space $H^2_d$ is a radially weighted Besov space.
 One calculates $\|f_n\|^2_{H^2(\dB)}=\frac{n! (d-1)!}{(n+d-1)!}\|f_n\|^2_{H^2_d}$, whenever $f_n$ is a homogeneous polynomial of degree $n$, see e.g. \cite{RiSunkesHankel}, Section 2. By use of that identity one verifies that for all $N\ge 0$
\begin{align}\label{sumNorm}\|f\|^2_{B^N_\om}=\om(\Bd) |f(0)|^2 + \sum_{n=1}^\infty n^{2N}\om_n\|f_n\|^2_{H^2_d},\end{align}  whenever $f=\sum_{n=0}^\infty f_n$ is the representation of $f$ as a sum of homogeneous polynomials of degree $n$. Here $\om_n= \frac{n! (d-1)!}{(n+d-1)!}\int_{[0,1]}r^{2n}d\mu(r)$. It follows that $H^2_d=B^N_\om$ with equivalence of norms for some $N$ and $\omega$, which depend on $d$.  In fact,
\begin{align}\label{DruryArveson} H^2_d=\left\{\begin{matrix} B^{(d-1)/2}_\om \ \text{ if } d \text{ is odd, } \om=\sigma\\B^{d/2}_\om  \ \text{ if } d \text{ is even, } \om=V.\end{matrix}\right.\end{align}
Here $V$ denotes normalized Lebesgue measure on $\Bd$.

More generally, in \cite{AHMRRadiallyWeightedBesov} it was shown that if  $\alpha >-1$ and if $d\om(z) = w(z) dV(z)$ where $\frac{w(z)}{(1-|z|^2)^\alpha}$  is non-decreasing as $|z|\to 1$,
then for $N\ge \frac{\alpha+d}{2}$  the space $B_\omega^N$ is a complete Pick space.

\

If $\HH$ is a Hilbert function space and if $f\in \HH$, then we use $[f]$ to denote the smallest multiplier invariant subspace that contains $f$, $[f]=\overline{\{\varphi f: \varphi \in \MH\}}^{\HH}$. Thus, $f$ is  cyclic, if $[f]=\HH$. In all cases that we will consider in this paper the multipliers of $\HH$ are densely contained in $\HH$. Then $f$ is cyclic in $\HH$, if and only if $1\in [f]$.

\

\subsection{Known theorems that will be used} In this section we have collected some known results that will be used in the proof of our main Theorem.
The  first required result about complete Pick spaces spaces is Theorem 1.1 (i) of \cite{AHMRFactor}, restated for the case where $g_w(z)=1$ and $\HE=\C$.
  \begin{theorem} \label{SarasonAnalog} Let $\HH$ be a Hilbert function space on $X$ such that the reproducing kernel $k$ is a normalized Pick-kernel with $k_{w_0}=1$. For  $f : X\to \C,$  the following are equivalent:
  \begin{enumerate}
  \item $f\in \HH$  and $ \|f\|\le 1 $
\item there are multipliers $\varphi, \psi\in \text{\rm Mult}(\HH)$ such that \begin{enumerate}
\item $f= \frac{\varphi}{1-\psi}$
\item $\psi(w_0)=0$, and
\item $\|\psi h\|^2+\|\varphi h\|^2 \le \|h\|^2 $ for every $h\in \HH$.
\end{enumerate}
\end{enumerate}
\end{theorem}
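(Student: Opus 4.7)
My plan is to prove the two implications separately.

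For the easier direction $(2) \Rightarrow (1)$, the starting point is that condition (c) says precisely that the column $V = \left(\begin{smallmatrix} M_\psi \\ M_\varphi \end{smallmatrix}\right) : \HH \to \HH \oplus \HH$ is a contraction. Since $k_{w_0}=1$, the constant function $1$ lies in $\HH$ with norm one, so $\psi^n \in \HH$ for all $n$. Applying (c) with $h = \psi^n$ yields $\|\psi^{n+1}\|^2 + \|\varphi\psi^n\|^2 \le \|\psi^n\|^2$, and this telescopes to
\begin{equation*}
\sum_{n=0}^\infty \|\varphi\psi^n\|^2 \le 1 - \lim_{n\to\infty}\|\psi^n\|^2 \le 1.
\end{equation*}
Since $\psi(w_0)=0$ and $|\psi(z)| \le \|M_\psi\| \le 1$, the series $\sum_n \varphi\psi^n$ converges pointwise to $f = \varphi/(1-\psi)$ at least in a neighborhood of $w_0$. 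To upgrade pointwise convergence to $\HH$-convergence with the norm bound $\|f\| \le 1$, I would bound the partial sums $f_N = \sum_{n=0}^N \varphi\psi^n$ uniformly in $\HH$-norm via a further application of (c) to telescoping combinations of $\psi^k$, then extract a weak $\HH$-limit; this limit is forced to agree with $f$ by point-evaluation continuity.

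The deeper direction is $(1) \Rightarrow (2)$, and it is here that the complete Pick property enters essentially. Given $f \in \HH$ with $\|f\| \le 1$, I would invoke the realization theorem for contractive multipliers on complete Pick spaces (McCullough--Quiggin, Agler--McCarthy, Ball--Trent--Vinnikov). Writing $k_w(z) = 1/(1 - \la b(z), b(w)\ra_{\HK})$ with $b(w_0)=0$, construct an auxiliary Hilbert space $\HF$ and a unitary colligation
\begin{equation*}
U = \begin{pmatrix} A & B \\ C & D \end{pmatrix} : \HF \oplus \C \to \HF \oplus \C
\end{equation*}
whose transfer function with respect to $b$ represents $f$. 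Reading off the block entries of $U$, paired against the base-point data, produces the desired $\psi$ and $\varphi$: condition (b), namely $\psi(w_0)=0$, follows from $b(w_0)=0$; condition (c) from the unitarity of $U$ restricted to the appropriate subspace; and the factorization (a) from the transfer-function identity itself.

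The principal obstacle is the $(1) \Rightarrow (2)$ construction: arranging the colligation so that both the vanishing condition $\psi(w_0)=0$ and the multiplier property $\varphi \in \Mult(\HH)$ (not merely $\varphi \in \HH$) hold simultaneously. This is precisely where the complete Pick hypothesis is used in its full strength, through a Leech--Ball--Trent--Vinnikov-type factorization of the positive kernel $(1 - f(z)\overline{f(w)})k_w(z)$. Once the colligation is in hand, the identity $\varphi = (1-\psi)f$ and the estimate in (c) both fall out of the operator identity $U^*U = I$ restricted to the natural subspaces.
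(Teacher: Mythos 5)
The first thing to note is that the paper does not prove this statement at all: Theorem~2.1 is quoted verbatim as Theorem~1.1(i) of \cite{AHMRFactor}, specialized to $g_w\equiv 1$ and $\HE=\C$, so there is no in-paper argument to compare yours against. Your outline does identify the right circle of ideas for the hard direction $(1)\Rightarrow(2)$: this is a de Branges--Rovnyak/Sarason-type description of the unit ball of $\HH$, obtained in the literature by a lurking-isometry/colligation argument applied to the positive kernel $k(z,w)-f(z)\overline{f(w)}$ rewritten in terms of the embedding $b$, with $b(w_0)=0$ forcing $\psi(w_0)=0$. As written, however, that half of your proof simply defers to the realization machinery, which is exactly where the work lies; one still has to carry out the factorization and check that the resulting $\varphi,\psi$ are genuine multipliers satisfying the joint column contraction (c).

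The concrete gap is in your $(2)\Rightarrow(1)$ argument. The telescoping step correctly yields $\sum_{n=0}^\infty\|\varphi\psi^n\|^2\le 1$, but square-summability of the norms does not bound the partial sums $f_N=\sum_{n=0}^N\varphi\psi^n$ in $\HH$: the terms $\varphi\psi^n$ need not be orthogonal, and the triangle inequality only gives $\|f_N\|\lesssim\sqrt{N+1}$. The promised ``further application of (c) to telescoping combinations of $\psi^k$'' is never specified, and an inductive attempt to prove, say, $\|f_N\|^2+\|\psi^{N+1}\|^2\le 1$ founders on the uncontrolled cross term $2\,\mathrm{Re}\langle f_{N-1},\varphi\psi^N\rangle$. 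The standard way to close this direction is not to sum the series in norm but to verify the membership criterion $k(z,w)-f(z)\overline{f(w)}\succeq 0$ directly --- equivalently, after congruence by the non-vanishing function $1-\psi$ (note $|\psi|<1$ pointwise since $\|M_\psi\|\le 1$ and $\psi(w_0)=0$), to deduce $(1-\psi(z))\overline{(1-\psi(w))}\,k(z,w)-\varphi(z)\overline{\varphi(w)}\succeq 0$ from the operator inequality $M_\psi^*M_\psi+M_\varphi^*M_\varphi\le I$. As it stands, this half of your argument does not go through, and the weak-limit extraction you describe has nothing to feed on without a uniform bound on $\|f_N\|$.
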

Part (a) in the next Lemma is Lemma 2.3 of \cite{AHMRsccps} and (b) is \Lemmsixseven of \cite{CyclicWeighedBesov_I}.
\begin{lemma}\label{multIndep} (a) If $\psi\in \MH$ is not constant and $\|\psi\|_{\MH}\le 1$, then $1-\psi$ is cyclic in $\HH$.

(b) If $\HH$ is a complete Pick space, and if $f=\frac{u}{v}\in \HH$ for $u,v \in \MH$, $v$ cyclic, then $[f]=[u]$.
\end{lemma}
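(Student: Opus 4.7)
For part (a), let $\HM = [1-\psi]$; the goal is to show $1 \in \HM$. Working in the equivalent norm that makes $\HH$ a normalized complete Pick space, $1 \in \HH$ with $\|1\|_\HH = 1$. Since $\MH$ is an algebra, the geometric sum $g_n = 1 + \psi + \cdots + \psi^n$ is a multiplier, and $g_n(1-\psi) = 1-\psi^{n+1} \in \HM$. The uniform bound $\|\psi^{n+1}\|_\HH \le \|\psi\|_\MH^{n+1}\|1\|_\HH \le 1$ makes $\{\psi^n\}$ bounded in $\HH$. Since $\psi$ is non-constant with $|\psi| \le \|\psi\|_\MH \le 1$ on $\Bd$, the maximum modulus principle yields $|\psi(z)| < 1$ for every $z \in \Bd$, so $\psi^n(z) \to 0$ pointwise. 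A bounded sequence in a Hilbert function space that converges pointwise to $0$ converges weakly to $0$ (test against the dense span of reproducing kernels). Hence $1-\psi^{n+1} \to 1$ weakly, and since $\HM$ is norm- hence weakly-closed, $1 \in \HM$.

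For part (b), one direction $[u] \subseteq [f]$ is immediate from $u = vf \in [f]$. For the reverse I will exhibit a multiplier $\varphi \in \MH$ with $[f] = [\varphi] = [u]$. After scaling so $\|f\|_\HH \le 1$, apply Theorem \ref{SarasonAnalog} to produce $\varphi, \xi \in \MH$ with $f = \varphi/(1-\xi)$, $\xi(w_0) = 0$, and $\|\xi h\|^2 + \|\varphi h\|^2 \le \|h\|^2$ for every $h \in \HH$. The last condition forces $\|M_\xi\|_{\HB(\HH)} \le 1$, so $\|\xi\|_\MH \le 1$. If $\xi \equiv 0$ then $f = \varphi$ is itself a multiplier and the problem reduces to $[\varphi] = [v\varphi]$, handled by cyclicity of $v$ as in the last step below; otherwise the maximum modulus principle gives $|\xi(z)| < 1$ strictly on $\Bd$.

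With $g_n = 1 + \xi + \cdots + \xi^{n-1} \in \MH$, the identity $(1-\xi)f = \varphi$ yields $g_n \varphi = (1-\xi^n)f$ and $g_n v \varphi = (1-\xi^n) u$. Because $M_\xi$ is a contraction, $\{\xi^n f\}$ and $\{\xi^n u\}$ are uniformly bounded in $\HH$, and they converge to $0$ pointwise on $\Bd$. By the same weak-convergence argument as in part (a), $\xi^n f \to 0$ and $\xi^n u \to 0$ weakly in $\HH$, so $g_n \varphi \to f$ and $g_n v\varphi \to u$ weakly. Both sequences lie in $[\varphi]$, and $[\varphi]$ is weakly closed, so $f, u \in [\varphi]$. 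Conversely, $\varphi = (1-\xi) f \in [f]$; for $\varphi \in [u]$, cyclicity of $v$ supplies multipliers $\psi_n$ with $\psi_n v \to 1$ in $\HH$, and boundedness of $M_\varphi$ then gives $\psi_n v \varphi \to \varphi$ in $\HH$, while $v\varphi = u(1-\xi) \in [u]$ places each $\psi_n v \varphi$ in $[u]$. Therefore $[f] = [\varphi] = [u]$.

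The technical crux in both parts is the same: promote pointwise convergence in $\Bd$ to weak convergence in $\HH$ via the maximum modulus principle plus an a priori $\HH$-bound. In part (a) the bound is the multiplier norm of $\psi$; in part (b) the bound is supplied by the contractivity built into the Sarason factorization, which is precisely what lets the cyclicity argument of (a) bridge between $f$ and $u$.
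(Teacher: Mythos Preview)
The paper does not prove this lemma itself; it simply cites Lemma~2.3 of \cite{AHMRsccps} for part~(a) and Lemma~6.7 of \cite{CyclicWeighedBesov_I} for part~(b). Your proof is correct and self-contained. The geometric-series plus weak-convergence argument for~(a) is the standard one, and your treatment of~(b) via the factorization of Theorem~\ref{SarasonAnalog}---reducing $[f]=[u]$ to showing $[\varphi]=[v\varphi]$ for the multiplier numerator $\varphi$, then using cyclicity of $v$ together with boundedness of $M_\varphi$---is the natural route. One minor remark: in~(a) you invoke the complete Pick hypothesis only to secure $1\in\HH$ with $\|1\|=1$, but all the argument actually needs is $1\in\HH$; it then works in any Hilbert function space of analytic functions on $\Bd$ containing the constants, which matches the generality in which the paper later applies~(a) in Lemma~\ref{log(1-phi)}.
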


The following is Lemma 3.3 of \cite{AHMRUniformSmirnov}.
\begin{lemma}\label{UniformSmirnovLemma} Let $\HH$ be a Hilbert function space on a set $X$ with a normalized complete Pick kernel.

If $f,g\in \MH$ such that $g\in [f]$, then there is a sequence $\{\varphi_j\}\subseteq \MH$ such that $\varphi_j(z)f(z) \to g(z)$ for all $z\in X$ and $\|\varphi_j f\|_{\MH}\le \|g\|_{\MH}$ for all $j\in \N$.
\end{lemma}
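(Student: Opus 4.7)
The plan is to combine a finite Pick interpolation argument with a weak-$*$ compactness argument in $\MH$. Without loss of generality I assume $\|g\|_{\MH}=1$. Since $g\in[f]$, there exist $\psi_n\in\MH$ with $\psi_n f\to g$ in $\HH$, and hence pointwise on $X$. These form preliminary approximants, but the multiplier norms $\|\psi_n f\|_{\MH}$ need not be controlled, whereas the lemma requires the uniform bound $\|\varphi_j f\|_{\MH}\le 1$.

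The heart of the proof is a ``finite corona with norm control'' for the pair $(f,g)$: for each finite subset $F=\{z_1,\ldots,z_N\}\subset X$ and each tolerance $\varepsilon>0$, I want to produce a multiplier $\varphi_{F,\varepsilon}\in\MH$ such that $\varphi_{F,\varepsilon}f$ is within $\varepsilon$ of $g$ at the points of $F$ and $\|\varphi_{F,\varepsilon}f\|_{\MH}\le 1$. The complete Pick hypothesis supplies, via ordinary Pick interpolation, a unit-ball multiplier that interpolates $g$ exactly on $F$ (the Pick matrix $[(1-g(z_i)\overline{g(z_j)})k(z_i,z_j)]$ is positive since $\|g\|_{\MH}\le 1$); the extra difficulty is to make the interpolant factor through $f$. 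Here the hypothesis $g\in[f]$ enters decisively: because $\psi_n(z_i)f(z_i)\to g(z_i)$, for large $n$ the data $(\psi_n f)(z_i)$ match those of $g$ up to arbitrarily small error, and a Toeplitz-corona / commutant-lifting argument in the complete Pick setting converts this approximate interpolation into an honest factorization through $f$ with norm at most $1+o(1)$, which can then be rescaled.

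With such $\varphi_{F,\varepsilon}$ in hand, I fix a countable dense sequence $\{z_i\}\subset X$ (available since $X=\Bd$ is separable) and for each $N$ choose $F_N=\{z_1,\ldots,z_N\}$ and $\varepsilon=1/N$ to obtain $\varphi_N\in\MH$ with $\|\varphi_N f\|_{\MH}\le 1$ and $|(\varphi_N f)(z_i)-g(z_i)|\le 1/N$ for $i\le N$. The sequence $\{\varphi_N f\}$ lies in the closed unit ball of $\MH$, which is weak-$*$ compact by Banach--Alaoglu; since point evaluations are weak-$*$ continuous, any weak-$*$ cluster point $h$ satisfies $h(z_i)=g(z_i)$ on the dense set, so $h=g$ by continuity. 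Extracting a weak-$*$ convergent subsequence produces the required $\varphi_j$, and weak-$*$ convergence on a bounded set in $\MH$ implies pointwise convergence on all of $X$.

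The main obstacle is the second step: simultaneously enforcing the factorization through $f$ and the sharp multiplier-norm bound by $\|g\|_{\MH}$. This is the genuine ``uniform Smirnov'' content of the lemma and is where the full strength of the complete Pick property is used; by contrast, the compactness/diagonalization in the third step is standard once the second step is in place.
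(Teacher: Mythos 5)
First, a point of reference: the paper does not prove this lemma at all --- it is imported verbatim as Lemma 3.3 of \cite{AHMRUniformSmirnov} --- so there is no internal argument to compare against, and your attempt has to stand on its own. Your overall architecture (a finite-set approximation with multiplier-norm control, followed by weak-$*$ compactness) is a sensible one, and your third step is essentially sound: on the ball $\{h\in\MH:\|h\|_{\MH}\le 1\}$, testing against the rank-one operators $k_z\otimes k_z$ shows that weak-$*$ convergence forces pointwise convergence at \emph{every} $z\in X$, and for $X=\Bd$ the bound $\|\varphi_Nf\|_\infty\le\|\varphi_Nf\|_{\MH}\le 1$ plus agreement up to $1/N$ on a dense set upgrades to pointwise convergence everywhere by normal families. (For an abstract set $X$ you would need nets or separability of $\HH$, but that is cosmetic.)

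The genuine gap is your second step, and you have in effect flagged it yourself. The sentence asserting that ``a Toeplitz-corona / commutant-lifting argument converts this approximate interpolation into an honest factorization through $f$ with norm at most $1+o(1)$'' is precisely the statement of the lemma localized to a finite set, and no mechanism for it is supplied. The Toeplitz corona (Leech) theorem for complete Pick spaces characterizes \emph{exact} factorizations $g=\varphi f$ with $\|\varphi\|_{\MH}\le 1$ by positivity of the kernel $\bigl(f(z)\overline{f(w)}-g(z)\overline{g(w)}\bigr)k(z,w)$, i.e.\ $M_fM_f^*\succeq M_gM_g^*$; this hypothesis is far stronger than $g\in[f]$ (it already fails for $g=1$ and $f$ cyclic but not invertible in $\MH$), and in any case it controls $\|\varphi\|_{\MH}$ rather than the quantity you need, namely $\|\varphi f\|_{\MH}$. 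On the other side, ordinary Pick interpolation of the data $g|_F$ produces a unit-ball multiplier $h$ with $h|_F=g|_F$, but nothing forces $h$ to lie in $f\cdot\MH$; while the approximants $\psi_nf$ furnished by $g\in[f]$ have the right values on $F$ but uncontrolled multiplier norm. The entire content of the lemma is the bridge between these two facts --- showing that $g|_F$ lies in the closure of the restrictions to $F$ of the set $\{\varphi f:\varphi\in\MH,\ \|\varphi f\|_{\MH}\le\|g\|_{\MH}\}$ --- i.e.\ converting the global, norm-topology hypothesis $g\in[f]$ into a norm-controlled local statement. Your proposal restates that conversion as a goal but does not prove it, so the argument is incomplete at its central point.
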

In particular it follows that if a multiplier $f\in \Mult(\HK)$ is cyclic in a complete Pick space $\HK$, then there is a sequence $\{\varphi_j\}\subseteq \Mult(\HK)$ such that $\varphi_j(z)f(z) \to 1$ for all $z\in X$, and $\|\varphi_j f\|_\infty \le 1$. Indeed, as we pass from a Hilbert function space $\HH$ with normalized complete Pick kernel to a complete Pick space $\HK=\HH$ we have to keep in mind that equivalent norms on the spaces imply equivalent multiplier norms. But the new statement is true, because $\|u\|_\infty\le \|u\|_{\Mult{\HK}}$ and $\|1\|_{\Mult(\HK)}=1$ for all Hilbert function spaces with $1\in \HK$.
 The following Lemma is known to hold for all radially weighted Besov spaces, and it was also used repeatedly in \cite{CyclicWeighedBesov_I}.
\begin{lemma}[\cite{CyclicWeighedBesov_I}, \Lemmfourtwo]\label{basicMultIncl} Suppose $N\in \N$ and  $\om$ is an admissible radial measure. Let $\varphi$ be an analytic function on $\Bd$.

Then for each integer $k$ with $1\le k\le N$ we have $\varphi \in \Mult(B^k_\om)$ if and only if $\varphi \in \Mult(B^{k-1}_\om)$ and $R\varphi \in \Mult(B^k_\om, B^{k-1}_\om)$.

Furthermore,  $$\|\varphi\|_{\Mult(B^k_\om)}\approx \|\varphi\|_{\Mult(B^{k-1}_\om)}+ \|R\varphi\|_{\Mult(B^k_\om,B^{k-1}_\om)}.$$
\end{lemma}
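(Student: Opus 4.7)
The plan is to combine two simple ingredients: the Leibniz identity $R(\varphi f)=(R\varphi)f+\varphi Rf$ and the recursion $\|g\|_{B^k_\om}^2=\om(\Bd)|g(0)|^2+\|Rg\|_{B^{k-1}_\om}^2$ from (\ref{NormNandN-1}). Two immediate consequences will be used freely: the map $R:B^k_\om\to B^{k-1}_\om$ is a contraction, and point evaluation at the origin satisfies $|g(0)|\le\om(\Bd)^{-1/2}\|g\|_{B^k_\om}$. I will also use the standard bound $\|\varphi\|_\infty\le\|\varphi\|_{\Mult(\HH)}$, valid for any Hilbert function space $\HH$.

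For the implication $\Leftarrow$ together with the upper bound on $\|\varphi\|_{\Mult(B^k_\om)}$, I would take the two hypotheses on the right-hand side and estimate $\|\varphi f\|_{B^k_\om}^2$ for $f\in B^k_\om$ by applying the recursion and then expanding $R(\varphi f)$ via Leibniz. The three resulting pieces $|\varphi(0)f(0)|$, $\|(R\varphi)f\|_{B^{k-1}_\om}$, and $\|\varphi Rf\|_{B^{k-1}_\om}$ are each dominated by $\bigl(\|\varphi\|_{\Mult(B^{k-1}_\om)}+\|R\varphi\|_{\Mult(B^k_\om,B^{k-1}_\om)}\bigr)\|f\|_{B^k_\om}$: the first via point evaluation at the origin, the second directly from the hypothesis on $R\varphi$, and the third by $\|\varphi Rf\|_{B^{k-1}_\om}\le\|\varphi\|_{\Mult(B^{k-1}_\om)}\|Rf\|_{B^{k-1}_\om}\le\|\varphi\|_{\Mult(B^{k-1}_\om)}\|f\|_{B^k_\om}$ using the contractivity of $R$.

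For the implication $\Rightarrow$, assume $\varphi\in\Mult(B^k_\om)$. Rearranging Leibniz gives $(R\varphi)f=R(\varphi f)-\varphi Rf$, and since $\|R(\varphi f)\|_{B^{k-1}_\om}\le\|\varphi f\|_{B^k_\om}\le\|\varphi\|_{\Mult(B^k_\om)}\|f\|_{B^k_\om}$, the bound on $R\varphi$ reduces to an estimate $\|\varphi Rf\|_{B^{k-1}_\om}\le\|\varphi\|_{\Mult(B^{k-1}_\om)}\|f\|_{B^k_\om}$. Hence everything hinges on establishing the inclusion $\Mult(B^k_\om)\subseteq\Mult(B^{k-1}_\om)$ with $\|\varphi\|_{\Mult(B^{k-1}_\om)}\lesssim\|\varphi\|_{\Mult(B^k_\om)}$; this is the main obstacle. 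It is free at $k=1$, because the coefficient expression (\ref{sumNorm}) identifies $B^0_\om$ as a weighted $L^2$-type space, whence $\Mult(B^0_\om)=H^\infty$ with multiplier norm equal to the sup-norm, so that $\|\varphi\|_{\Mult(B^0_\om)}=\|\varphi\|_\infty\le\|\varphi\|_{\Mult(B^1_\om)}$ is automatic. For $k\ge 2$ I would induct on $k$: applying the lemma at level $k-1$ together with a further round of the recursion/Leibniz machinery to $\|\varphi f\|_{B^{k-1}_\om}^2$ on polynomials (which are dense in each $B^j_\om$) should yield the descent. Threading the norm bounds on $R\varphi$ through the induction so that the equivalence constants do not deteriorate is the delicate technical step.
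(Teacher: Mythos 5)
First, a point of comparison: the paper does not prove this statement at all --- it is imported verbatim as Lemma 4.2 of \cite{CyclicWeighedBesov_I} and used as a black box --- so there is no in-paper argument to measure yours against. Judged on its own terms, your proposal is about half a proof. The $\Leftarrow$ implication, together with the bound $\|\varphi\|_{\Mult(B^k_\om)}\lesssim \|\varphi\|_{\Mult(B^{k-1}_\om)}+\|R\varphi\|_{\Mult(B^k_\om,B^{k-1}_\om)}$, is correct and complete: the recursion (\ref{NormNandN-1}), the Leibniz rule, the contractivity of $R:B^k_\om\to B^{k-1}_\om$, and the elementary bounds on $\varphi(0)$ and $f(0)$ do exactly what you describe. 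Likewise, once the inclusion $\Mult(B^k_\om)\subseteq\Mult(B^{k-1}_\om)$ with norm control is available, your rearrangement $(R\varphi)f=R(\varphi f)-\varphi Rf$ correctly delivers $R\varphi\in\Mult(B^k_\om,B^{k-1}_\om)$ and the reverse norm inequality. The identification $\Mult(B^0_\om)=H^\infty$ isometrically, hence the case $k=1$, is also fine.

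The genuine gap is the descent $\Mult(B^k_\om)\subseteq\Mult(B^{k-1}_\om)$ for $k\ge 2$, which you correctly single out as the crux but do not prove: it is essentially the entire content of the lemma, and the induction you sketch does not close. To conclude $\varphi\in\Mult(B^{k-1}_\om)$ from the lemma at level $k-1$ you would need both $\varphi\in\Mult(B^{k-2}_\om)$ and $R\varphi\in\Mult(B^{k-1}_\om,B^{k-2}_\om)$, and neither is available from the hypothesis $\varphi\in\Mult(B^k_\om)$ without already knowing some form of the very descent you are trying to establish; the same circularity appears if you instead expand $\|\varphi f\|^2_{B^{k-1}_\om}$ by the recursion, since the term $\varphi Rf$ then requires $\varphi\in\Mult(B^{k-2}_\om)$. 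Nor can one fall back on the standard Schur-product trick of testing against reproducing kernels: that would require the ratio of the kernels of $B^{k-1}_\om$ and $B^k_\om$ to be a positive kernel, i.e. that $\bigl(\sum_n n^2 a_n t^n\bigr)/\bigl(\sum_n a_n t^n\bigr)$ have nonnegative Taylor coefficients, which fails for general admissible weights. In the literature this inclusion is a theorem in its own right about radially weighted Besov spaces (proved by realizing $B^{k-1}_\om$ as $B^k_{\om'}$ for another admissible radial measure $\om'$ and comparing multiplier algebras under such reweightings); that is precisely what the citation to Lemma 4.2 of \cite{CyclicWeighedBesov_I} supplies and what your argument would still have to reconstruct.
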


Finally, we recall \Theooneeight of  \cite{CyclicWeighedBesov_I}.
\begin{theorem} \label{Paper1:AbsoluteValuePickIntro} Let $N\in\N$, and let $B^N_\om$ be a radially weighted Besov space that is also a complete Pick space, and let $f,g\in B^N_\om$.

If $|f(z)|\le |g(z)|$ for all $z\in \Bd$, and if $f$ is cyclic, then $g$ is cyclic.
\end{theorem}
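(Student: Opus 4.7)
The plan is to reduce to showing $v$ cyclic for a pair of multipliers, and then attempt to produce an explicit multiplier factor. Since $f$ is cyclic in $\HH$, every element of $[f]$ vanishes on the zero set of $f$ in $\Bd$, so $f$ must be zero-free there. The pointwise bound $|f|\le|g|$ then forces $g$ to be zero-free in $\Bd$ as well, so $f/g$ is a holomorphic function on $\Bd$ bounded by $1$ in modulus. After normalizing $\|f\|,\|g\|\le 1$, Theorem \ref{SarasonAnalog} yields factorizations $f=\varphi_f/(1-\psi_f)$ and $g=\varphi_g/(1-\psi_g)$ with $\psi_\ast(w_0)=0$ and the column contraction property. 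By Lemma \ref{multIndep}(a), $1-\psi_f$ and $1-\psi_g$ are cyclic, so by Lemma \ref{multIndep}(b), $[f]=[\varphi_f]$ and $[g]=[\varphi_g]$. Setting $u=\varphi_f(1-\psi_g)$ and $v=\varphi_g(1-\psi_f)$, both in $\Mult(\HH)$, we have $|u|\le|v|$ on $\Bd$ and $u$ is cyclic (as a product of a cyclic element with a cyclic multiplier). Modulo the cyclicity of $1-\psi_f$ (which lets $\varphi_g$ inherit cyclicity from $v$ via Lemma \ref{multIndep}(b)), the problem reduces to the following multiplier statement: if $u,v\in\Mult(\HH)$ with $|u|\le|v|$ on $\Bd$ and $u$ cyclic, then $v$ is cyclic.

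The natural way to finish is to exhibit a multiplier $\rho\in\Mult(\HH)$ with $u=\rho v$, for then $u\in[v]$ and hence $\HH=[u]\subseteq[v]$. Because $v$ is zero-free in $\Bd$ (as $|v|\ge|u|>0$ there), the candidate $\rho:=u/v$ is at least a bounded analytic function with $\|\rho\|_\infty\le 1$. For the Hardy space case $d=1$, $N=0$, we have $\Mult(\HH)=H^\infty$, so $\rho\in\Mult(\HH)$ immediately, and the classical outer-function machinery closes the argument. For higher $N$ the plan is to induct on $N$ using Lemma \ref{basicMultIncl}, which identifies $\Mult(B^N_\om)$ with $\Mult(B^{N-1}_\om)$ intersected with the condition $R\rho\in\Mult(B^N_\om,B^{N-1}_\om)$. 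Writing $Ru=\rho\,Rv+v\,R\rho$, one solves $R\rho=(Ru-\rho\,Rv)/v$ and tries to bound this as a multiplier from $B^N_\om$ into $B^{N-1}_\om$ using the hypothesis $|\rho|\le 1$ together with the inductively established multiplier properties of the numerator and pointwise control of $1/v$ where it is defined.

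The main obstacle is the derivative step. Pointwise bounds of the form $|\rho|\le 1$ do not upgrade automatically to multiplier bounds in a complete Pick space; indeed, for $H^2_d$ with $d\ge 2$ one has $\Mult(H^2_d)\subsetneq H^\infty(\Bd)$, so a purely $H^\infty$-level argument cannot suffice. The required mechanism is to couple the CPS column contractivity from Theorem \ref{SarasonAnalog} — which provides the $L^2(\HH)$-side control needed to interpret pointwise quotients at the multiplier level — with the radially-weighted Besov ladder of Lemma \ref{basicMultIncl}. Tracking how $|u|\le|v|$ propagates through the identity $Ru=\rho\,Rv+v\,R\rho$ and through the inductive hypothesis is the delicate technical point; this is where the joint complete Pick / radially weighted Besov hypothesis is used in an essential way, and where the hardest part of the proof lies.
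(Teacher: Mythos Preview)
This theorem is not proved in the present paper at all: it is quoted verbatim from \cite{CyclicWeighedBesov_I} (their Theorem~1.8) as one of the ``known theorems that will be used.'' So there is no proof here to compare against.

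That said, your proposal is not a proof but a reduction followed by an unfulfilled promise. The first paragraph is fine: passing from $f,g\in\HH$ to multipliers $u,v\in\Mult(\HH)$ with $|u|\le|v|$ and $u$ cyclic is the natural normalization. The trouble is the remainder. You propose to show that $\rho=u/v\in\Mult(\HH)$ via induction on $N$ and the identity $R\rho=(Ru-\rho\,Rv)/v$. This step fails as written: nothing prevents $v$ from vanishing on $\partial\Bd$, so $1/v$ is not bounded and there is no ``pointwise control of $1/v$'' to invoke; the right-hand side therefore has no reason to lie in $\Mult(B^N_\om,B^{N-1}_\om)$. Your third paragraph essentially concedes this by locating ``the hardest part of the proof'' precisely at the point you have not carried out.

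More fundamentally, aiming for $\rho\in\Mult(\HH)$ is likely the wrong target. In a complete Pick space the pointwise inequality $|u|\le|v|$ does \emph{not} imply the operator inequality $M_uM_u^*\le M_vM_v^*$ that would be needed for a Leech-type factorization $u=\rho v$ with $\rho$ a contractive multiplier, and there is no known mechanism by which cyclicity of $u$ repairs this. The argument in \cite{CyclicWeighedBesov_I} does not proceed by exhibiting such a $\rho$; it uses instead structural properties of invariant subspaces in radially weighted Besov spaces together with the complete Pick machinery to obtain $[g]\supseteq[f]$ without ever asserting that $f/g$ is a multiplier. If you want to reconstruct the proof, that is the direction to look; the inductive scheme you sketch does not close.
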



\section{A condition for cyclicity}

 We start with an elementary, yet important observation.
\begin{theorem} \label{1overf} Suppose $\HH$ has a normalized complete Pick kernel. Let $f\in \HH$. Then $f$ is cyclic if and only if $1/f \in N^+(\HH)$.
\end{theorem}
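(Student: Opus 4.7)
The plan is to argue both implications using the tools recalled in Section~2. For the forward direction I would lean on Theorem~\ref{SarasonAnalog} together with the two parts of Lemma~\ref{multIndep}; the converse is a quick invariant-subspace argument.

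Assume first that $f$ is cyclic. After rescaling (which affects neither cyclicity nor membership in $N^+(\HH)$) I may take $\|f\|_{\HH}\le 1$, so Theorem~\ref{SarasonAnalog} produces $\varphi,\psi\in\Mult(\HH)$ with $f=\varphi/(1-\psi)$, $\psi(w_0)=0$, and $\|\psi h\|^2+\|\varphi h\|^2\le\|h\|^2$ for all $h\in\HH$; in particular $\|\psi\|_{\Mult(\HH)}\le 1$. If $\psi\equiv 0$ then $f=\varphi$ is itself a cyclic multiplier and $1/f=1/\varphi\in N^+(\HH)$ trivially. Otherwise Lemma~\ref{multIndep}(a) makes $1-\psi$ cyclic, and Lemma~\ref{multIndep}(b) upgrades this to $[f]=[\varphi]$; so cyclicity of $f$ passes to $\varphi$, and the identity $1/f=(1-\psi)/\varphi$ exhibits $1/f$ as a quotient of multipliers with a cyclic denominator, i.e.\ $1/f\in N^+(\HH)$.

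For the converse, if $1/f=u/v$ with $u,v\in\Mult(\HH)$ and $v$ cyclic, then $v=uf$ lies in $[f]$ because $u$ is a multiplier; hence $\HH=[v]\subseteq[f]$, and $f$ is cyclic.

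The only step demanding any real input is the forward direction. The main conceptual obstacle, as I see it, is noticing that the Sarason denominator $1-\psi$ is automatically cyclic, since this is precisely what lets Lemma~\ref{multIndep}(b) transfer cyclicity from $f$ to the numerator $\varphi$ and thereby swap the roles of numerator and denominator in the $N^+$ quotient; beyond isolating the trivial case $\psi\equiv 0$, I do not anticipate any further difficulty.
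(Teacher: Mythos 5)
Your proof is correct and follows essentially the same route as the paper: the forward direction uses the Sarason-type decomposition $f=\varphi/(1-\psi)$ from Theorem~\ref{SarasonAnalog} together with both parts of Lemma~\ref{multIndep} to transfer cyclicity to the numerator, exactly as the authors do. Your converse ($v=uf\in[f]$, hence $\HH=[v]\subseteq[f]$) is a slightly more direct version of the paper's computation $1\in[b(1-v)]=[au]\subseteq[u]=[f]$, but it is the same underlying idea.
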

\begin{proof} We assume $\|f\|=1$. Then by Theorem \ref{SarasonAnalog} and Lemma \ref{multIndep}  we have $f=u/(1-v)$ for contractive multipliers $u,v$ with $[f]=[u]$ and $[1-v]=\HH$. Thus, if $f$ is cyclic, then $u$ is cyclic and $1/f=(1-v)/u\in N^+(\HH)$. Conversely, if $1/f\in N^+(\HH)$, then $1/f=a/b$ for multipliers $a$ and $b$ where $b$ is cyclic. Then $au=b(1-v)$. Since $b$ and $1-v$ are cyclic multipliers it is clear that $b(1-v)$ is cyclic, hence $1\in [b(1-v)]=[au] \subseteq [u]=[f]$. Hence $f$ is cyclic.
\end{proof}
Thus, one can prove that a function is cyclic by trying to verify that for some $n$ one has $f^{-1/n}\in \HH$. For $n=1$ and $\HH=H^2_d$ this was one of the results of \cite{RiSunkesHankel}. However,  often this will be false for all $n$ even though $f$ is cyclic. It turns out one  has a better chance by considering $\log f$. In light of Theorem \ref{SarasonAnalog} the following lemma implies that one may restrict attention to multipliers.

\begin{lemma} \label{log(1-phi)} Let $\HH$ be a Hilbert function space on $\Bd$. If $\varphi\in \Mult(\HH)$ with $\|\varphi\|_{\Mult(\HH)}\le 1$ and $\varphi\ne 1$, then $\log(1-\varphi)\in N^+(\HH)$.
\end{lemma}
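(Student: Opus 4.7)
The plan is to exhibit $\log(1-\varphi)$ as an explicit Smirnov-class quotient, with $b=1-\varphi$ as the denominator and $a=(1-\varphi)\log(1-\varphi)$ as the numerator. The function $b$ is evidently a multiplier, and since $\|\varphi\|_{\Mult(\HH)}\le 1$, Lemma \ref{multIndep}(a) directly supplies cyclicity of $1-\varphi$ whenever $\varphi$ is non-constant. The constant case is handled separately: if $\varphi\equiv c$ with $c\ne 1$, then $\log(1-\varphi)$ is itself a constant and trivially lies in $N^+(\HH)$.

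Before anything else, I would check that $\log(1-\varphi)$ actually makes sense as a holomorphic function on $\Bd$. Because $\HH$ is a Hilbert function space, the standard estimate $\|\varphi\|_\infty\le\|\varphi\|_{\Mult(\HH)}\le 1$ applies (via $M_\varphi^*k_w=\overline{\varphi(w)}k_w$). If $\varphi$ is non-constant, the maximum modulus principle then forces $|\varphi(z)|<1$ strictly on $\Bd$, so $1-\varphi$ avoids both $0$ and the non-positive real axis (its values lie in the closed disc $\{|w-1|\le 1\}$ minus the origin). The principal branch of $\log$ therefore defines $\log(1-\varphi)\in\operatorname{Hol}(\Bd)$.

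The heart of the argument is to show that $a=(1-\varphi)\log(1-\varphi)$ is a multiplier. I would expand the one-variable function: starting from $-\log(1-w)=\sum_{n\ge 1}w^n/n$ and rearranging, one gets
$$
(1-w)\log(1-w)=-w+\sum_{n=2}^{\infty}\frac{w^n}{n(n-1)},
$$
a series that converges \emph{absolutely on the closed unit disc} because $\sum_{n\ge 2}\frac{1}{n(n-1)}<\infty$. Substituting $w=\varphi$ and using $\|\varphi^n\|_{\Mult(\HH)}\le 1$, the same series converges in the operator-norm topology of $\Mult(\HH)$, and pointwise evaluation identifies its limit with $(1-\varphi)\log(1-\varphi)$. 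Hence $a\in\Mult(\HH)$.

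Putting the pieces together, $\log(1-\varphi)=a/b$ with $a,b\in\Mult(\HH)$ and $b$ cyclic, so $\log(1-\varphi)\in N^+(\HH)$. I do not anticipate a genuine obstacle: the only delicate step is justifying that $\log(1-\varphi)$ is a bona fide holomorphic function (which needs the strict inequality $|\varphi|<1$ in $\Bd$ coming from the maximum principle), and the multiplier-norm convergence of the series is immediate from the absolute convergence of the coefficient series.
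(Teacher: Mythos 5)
Your proof is correct and follows essentially the same route as the paper: both write $\log(1-\varphi)$ as $\bigl((1-\varphi)\log(1-\varphi)\bigr)/(1-\varphi)$, establish that the numerator is a multiplier via the power series $(1-w)\log(1-w)=-w+\sum_{n\ge 2}w^n/(n(n-1))$ with absolutely summable coefficients, and invoke Lemma \ref{multIndep}(a) for the cyclicity of the denominator. Your explicit treatment of the constant case and of the strict bound $|\varphi|<1$ (the paper cites Lemma 2.2 of \cite{AHMRsccps} for the latter) are minor presentational differences only.
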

 \begin{proof} Note that for each $z\in \Bd$ we have $|\varphi(z)|<1$ (Lemma 2.2 of \cite{AHMRsccps}) hence
 \begin{align*}u(z)&=(1-\varphi(z))\log(1-\varphi(z))\\&=-(1-\varphi(z))\sum_{n=1}^\infty \frac{\varphi^n(z)}{n}\\
 &=-\varphi(z)\left(1-\sum_{n=2}^\infty\frac{\varphi^n(z)}{n(n+1)}\right).\end{align*} The last sum converges in the multiplier norm, hence $u$ is a multiplier and $\log(1-\varphi)=\frac{u}{1-\varphi}$ is a ratio of multipliers and we have already mentioned that $1-\varphi$ is cyclic (Lemma \ref{multIndep}).
 \end{proof}


\section{Lemmas about weighted Besov spaces}

Part (a) of the following elementary lemma will be used  in induction arguments. In part (b) we have already carried out such an induction.

\begin{lemma}\label{MultiplierInductionStep} Suppose $N\in \N$ and  $\om$ is an admissible radial measure. Fix an integer $k$ with $1\le k\le N$.

(a)  For each integer $n \ge 0$ there is a  $C>0$ such that whenever $\varphi, \psi \in \Mult(B^k_\om)$ and  $\psi^n e^{-\frac{\varphi}{\psi}} \in \Mult(B^{k-1}_{\om})$, then $\psi^{n+2} e^{-\frac{\varphi}{\psi}} \in \Mult(B^k_{\om})$, and
\begin{equation}\label{psinexp}\|\psi^{n+2} e^{-\frac{\varphi}{\psi}}\|_{\Mult(B^k_\om)} \le C\|\psi^n e^{-\frac{\varphi}{\psi}}\|_{\Mult(B^{k-1}_\om)} (\|\varphi\|_{\Mult(B^k_\om)}+\|\psi\|_{\Mult(B^k_\om)})^2.\end{equation}

(b) If $\varphi \in \Mult(B^N_\om)$ is zero free in $\Bd$ and $j$ is a non-negative integer such that $\varphi^j \log \varphi \in H^\infty$, then there is an integer $n \ge j$ such that $\varphi^{n}\log \varphi \in \Mult(B^N_\om)$.
\end{lemma}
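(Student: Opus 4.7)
The plan is to handle both parts via Lemma~\ref{basicMultIncl}, which reduces membership in $\Mult(B^k_\om)$ to membership in $\Mult(B^{k-1}_\om)$ together with membership of the radial derivative in $\Mult(B^k_\om, B^{k-1}_\om)$. In both parts the entire argument is algebraic once the right differentiation identity is written down.

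For part (a), I would exploit that $R$ is a derivation on $\Hol(\Bd)$ and satisfies $R(e^f) = e^f \cdot Rf$, together with $R(\varphi/\psi) = R\varphi/\psi - \varphi R\psi/\psi^2$, to obtain the single key identity
\begin{equation*}
R\bigl(\psi^{n+2} e^{-\varphi/\psi}\bigr) \;=\; \psi^n e^{-\varphi/\psi}\bigl[(n+2)\psi R\psi - \psi R\varphi + \varphi R\psi\bigr].
\end{equation*}
By hypothesis $\psi^n e^{-\varphi/\psi} \in \Mult(B^{k-1}_\om)$, so $\psi^{n+2} e^{-\varphi/\psi} = \psi^2 \cdot \psi^n e^{-\varphi/\psi} \in \Mult(B^{k-1}_\om)$. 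Each term in the bracket has the form $u \cdot Rv$ with $u,v \in \Mult(B^k_\om)$; by Lemma~\ref{basicMultIncl} we have $Rv \in \Mult(B^k_\om, B^{k-1}_\om)$, and multiplying by $u \in \Mult(B^{k-1}_\om)$ keeps the product in $\Mult(B^k_\om, B^{k-1}_\om)$. Multiplying the bracket by the $\Mult(B^{k-1}_\om)$-factor then shows $R(\psi^{n+2} e^{-\varphi/\psi}) \in \Mult(B^k_\om, B^{k-1}_\om)$, so Lemma~\ref{basicMultIncl} gives $\psi^{n+2} e^{-\varphi/\psi} \in \Mult(B^k_\om)$. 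Estimate~(\ref{psinexp}) then follows by bookkeeping: the $\Mult(B^k_\om, B^{k-1}_\om)$-norm of the bracket is bounded by a constant times $(\|\varphi\|_{\Mult(B^k_\om)} + \|\psi\|_{\Mult(B^k_\om)})^2$, where the constant absorbs $n+2$ and one uses the quantitative half of Lemma~\ref{basicMultIncl} to dominate $\|Rv\|_{\Mult(B^k_\om, B^{k-1}_\om)}$ by $\|v\|_{\Mult(B^k_\om)}$.

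For part (b), I would induct on $k = 0, 1, \dots, N$, proving that there is $n_k \ge j$ with $\varphi^{n_k} \log \varphi \in \Mult(B^k_\om)$. The base case $k=0$ is immediate: $\om$ is a finite measure (as the norm formula requires $\om(\Bd) < \infty$), so $H^\infty \subseteq \Mult(B^0_\om)$ and the hypothesis $\varphi^j \log \varphi \in H^\infty$ lets us take $n_0 = j$. For the inductive step set $n_k = n_{k-1} + 1$; then $R(\log \varphi) = R\varphi/\varphi$ and the product rule give
\begin{equation*}
R(\varphi^{n_k} \log \varphi) \;=\; R\varphi \cdot \bigl(n_k\, \varphi^{n_{k-1}} \log \varphi + \varphi^{n_{k-1}}\bigr).
\end{equation*}
The parenthesized factor lies in $\Mult(B^{k-1}_\om)$ by the inductive hypothesis and the fact that $\varphi^{n_{k-1}} \in \Mult(B^{k-1}_\om)$, while $R\varphi \in \Mult(B^k_\om, B^{k-1}_\om)$ by Lemma~\ref{basicMultIncl}. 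Hence the product belongs to $\Mult(B^k_\om, B^{k-1}_\om)$. Since also $\varphi^{n_k}\log\varphi = \varphi \cdot \varphi^{n_{k-1}}\log\varphi \in \Mult(B^{k-1}_\om)$, Lemma~\ref{basicMultIncl} yields $\varphi^{n_k}\log\varphi \in \Mult(B^k_\om)$, closing the induction and producing $n = n_N = j + N$.

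The main obstacle is not a hard inequality but the clean algebraic factorization in part~(a): one must check that the explicit expression for $R(\psi^{n+2} e^{-\varphi/\psi})$ factors exactly into a $\Mult(B^{k-1}_\om)$-factor times a bracket belonging to $\Mult(B^k_\om, B^{k-1}_\om)$, with the latter bounded by $(\|\varphi\|_{\Mult(B^k_\om)} + \|\psi\|_{\Mult(B^k_\om)})^2$ uniformly in $\varphi, \psi$. Once this identity is verified, the remainder in both parts is a mechanical application of Lemma~\ref{basicMultIncl}.
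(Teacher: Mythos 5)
Your proposal is correct and follows essentially the same route as the paper: the identity $R(\psi^{n+2}e^{-\varphi/\psi})=\psi^n e^{-\varphi/\psi}\bigl((n+2)\psi R\psi-\psi R\varphi+\varphi R\psi\bigr)$ combined with Lemma~\ref{basicMultIncl} for part (a), and the induction on $k$ via $R(\varphi^{n+1}\log\varphi)=((n+1)\log\varphi+1)\varphi^n R\varphi$ for part (b), are exactly the paper's argument. The only (harmless) addition is that you make the final exponent $n=j+N$ explicit.
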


\begin{proof} We start by remarking that radially weighted Besov spaces satisfy the multiplier inclusion condition $ \Mult(B^k_\om)\subseteq  \Mult(B^{k-1}_\om)$ with $\|\cdot\|_{\Mult(B^{k-1}_\om)}\le \|\cdot\|_{\Mult(B^{k}_\om)}$, see Lemma \ref{basicMultIncl}.

(a) We will use Lemma \ref{basicMultIncl}. We assume that $\psi^n e^{-\frac{\varphi}{\psi}} \in \Mult(B^{k-1}_\om)$ for two multipliers $\varphi, \psi \in \Mult(B^k_\om)$ and an integer $n \ge 0$. It is clear that $\psi^{n+2} e^{-\frac{\varphi}{\psi}} \in \Mult(B^{k-1}_{\om})$ with norm estimate
$$\|\psi^{n+2} e^{-\frac{\varphi}{\psi}}\|_{\Mult(B^{k-1}_{\om})}\le \|\psi\|_{\Mult(B^{k-1}_\om)}^2 \|\psi^n e^{-\frac{\varphi}{\psi}}\|_{\Mult(B^{k-1}_\om)}.$$ Using the contractive inclusion  $ \Mult(B^k_\om)\subseteq  \Mult(B^{k-1}_\om)$ we see that this last expression is bounded by the right hand side of equation (\ref{psinexp}).

It remains to show that $R(\psi^{n+2} e^{-\frac{\varphi}{\psi}}) \in \Mult(B^k_\om,B^{k-1}_\om)$. A straightforward calculation shows
$$R(\psi^{n+2} e^{-\frac{\varphi}{\psi}})= \psi^n e^{-\frac{\varphi}{\psi}}\left((n+2) (R\psi) \psi -( (R\varphi)\psi - (R\psi) \varphi )\right),$$ and it is clear from the hypotheses that this function multiplies $B^k_\om$ into $B^{k-1}_\om$. In fact, by use of Lemma \ref{basicMultIncl} we get the norm estimate
\begin{multline*}
\|R(\psi^{n+2} e^{-\frac{\varphi}{\psi}})\|_{\Mult(B^k_\om,B^{k-1}_\om)} \\ \le C(n+2)\|\psi^n e^{-\frac{\varphi}{\psi}}\|_{\Mult(B^{k-1}_{\om})} (\|\varphi\|_{\Mult(B^k_\om)}+\|\psi\|_{\Mult(B^k_\om)})^2.\end{multline*}

(b) Note that the hypothesis says that $\varphi^j \log \varphi\in \Mult(B^0_\om)$.

By induction it will be sufficient to show the following:
If $1\le k\le N$ and if $\varphi^n \log \varphi \in \Mult(B^{k-1}_\om)$, then $\varphi^{n+1} \log \varphi \in \Mult(B^k_{\om})$.

We will use Lemma \ref{basicMultIncl} again. Since it is clear from the multiplier inclusion condition that $\varphi^{n+1} \log \varphi \in \Mult(B^{k-1}_\om)$, it is enough to show that
  $R(\varphi^{n+1} \log \varphi)$ multiplies $B^k_\om$ into $B^{k-1}_\om$. We have
$R(\varphi^{n+1} \log \varphi)= ((n+1) \log \varphi +1)\varphi^nR\varphi$. Since $\varphi\in \Mult(B^k_\om)$ $R\varphi$ multiplies $B^k_\om$ into $B^{k-1}_\om$. The result follows since $((n+1) \log \varphi +1)\varphi^n \in \Mult(B^{k-1}_\om)$.
\end{proof}
\begin{theorem} \label{technicalTheo} Let $\HH$ be a radially weighted Besov space.

 There are $C>0$ and $n\in \N$ such that whenever  $\varphi, \psi \in \Mult(\HH)$ with Re $\frac{\varphi}{\psi} \ge 0$, then  $\psi^n e^{-\frac{\varphi}{\psi}} \in \Mult(\HH)$ and $$\|\psi^n e^{-\frac{\varphi}{\psi}} \|_{\Mult(\HH)} \le C (\|\psi\|_{\Mult(\HH)}+\|\varphi\|_{\Mult(\HH)})^n.$$
\end{theorem}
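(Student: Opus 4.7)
The strategy is to induct on $N$, writing $\HH = B^N_\omega$ and invoking Lemma \ref{MultiplierInductionStep}(a) as the engine of the induction. Part (a) of that lemma is precisely a statement of the form ``if $\psi^m e^{-\varphi/\psi}$ multiplies $B^{k-1}_\omega$, then $\psi^{m+2} e^{-\varphi/\psi}$ multiplies $B^k_\omega$,'' with a controlled constant. So starting from a base case at level $k=0$ with exponent $m=0$ and iterating $N$ times will produce $n = 2N$.

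First I would establish the base case $N = 0$. Here $B^0_\omega$ is $L^2(\omega)$ intersected with $\Hol(\Bd)$ (possibly with an added Hardy boundary term if $\mu$ has a point mass at $1$), and multiplication by any bounded analytic function is a contraction on $B^0_\omega$ up to a uniform factor. The key observation is that the hypothesis $\mathrm{Re}\, \varphi/\psi \ge 0$ forces $|e^{-\varphi/\psi}| \le 1$ pointwise (where defined), and $\psi^0 e^{-\varphi/\psi} = e^{-\varphi/\psi}$ extends analytically across any removable singularities coming from zeros of $\psi$, since the factor $e^{-\varphi/\psi}$ stays bounded by $1$. Hence $e^{-\varphi/\psi} \in \Mult(B^0_\omega)$ with norm at most a constant, giving the base case with $n = 0$ and some absolute $C_0$.

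Next, I would run the inductive step. Suppose the conclusion holds for $B^{k-1}_\omega$: there exist constants $C_{k-1}$ and $n_{k-1}$ such that $\psi^{n_{k-1}} e^{-\varphi/\psi} \in \Mult(B^{k-1}_\omega)$ with norm at most $C_{k-1}(\|\varphi\|_{\Mult(B^{k-1}_\omega)} + \|\psi\|_{\Mult(B^{k-1}_\omega)})^{n_{k-1}}$. By the contractive inclusion $\Mult(B^k_\omega) \subseteq \Mult(B^{k-1}_\omega)$ recorded in Lemma \ref{basicMultIncl}, we may replace the $\Mult(B^{k-1}_\omega)$ norms of $\varphi, \psi$ by their (larger) $\Mult(B^k_\omega)$ norms. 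Now apply Lemma \ref{MultiplierInductionStep}(a) with $n = n_{k-1}$ to conclude that $\psi^{n_{k-1}+2} e^{-\varphi/\psi} \in \Mult(B^k_\omega)$, with norm at most
\[
C \cdot C_{k-1} \bigl(\|\varphi\|_{\Mult(B^k_\omega)} + \|\psi\|_{\Mult(B^k_\omega)}\bigr)^{n_{k-1}+2}.
\]
Thus the induction proceeds with $n_k = n_{k-1} + 2$ and $C_k = C \cdot C_{k-1}$. After $N$ steps we obtain $n = 2N$ and a constant depending only on $N$ and $\omega$, yielding the theorem.

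I do not expect any genuine obstacle: Lemma \ref{MultiplierInductionStep}(a) is tailored precisely for this iteration, and the only nontrivial input is the base case, which is almost immediate from $\mathrm{Re}\,\varphi/\psi \ge 0$. The one subtlety worth flagging is the analytic extendability of $e^{-\varphi/\psi}$ across zeros of $\psi$; this is handled by the boundedness $|e^{-\varphi/\psi}| \le 1$ together with Riemann's removable singularity theorem (or, more plainly, the fact that $\psi^n e^{-\varphi/\psi}$ is a pointwise limit of analytic functions bounded by $|\psi|^n$ in a neighborhood of any such zero and hence is analytic there).
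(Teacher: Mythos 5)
Your proposal is correct and follows essentially the same route as the paper: the base case is $e^{-\varphi/\psi}\in \Mult(B^0_\om)=H^\infty$ with norm at most $1$ from $\mathrm{Re}\,\varphi/\psi\ge 0$, and then Lemma \ref{MultiplierInductionStep}(a) is iterated $N$ times to reach $n=2N$ with a constant depending only on $N$ and $\om$. The extra remark about extending $e^{-\varphi/\psi}$ across zeros of $\psi$ is a reasonable point of care that the paper leaves implicit.
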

\begin{proof} Let $N\in \N$ and let $\om$ be an admissible radial measure such that $\HH=B^N_\om$. Recall that the contractive inclusions $\Mult(B^{k}_\om)\subseteq \Mult(B^{k-1}_\om)$ hold for each integer $k$ with $1\le k \le N$.

 The hypothesis implies that  $e^{-\frac{\varphi}{\psi}} \in \Mult(B^0_\om)=H^\infty$ with $\|e^{-\frac{\varphi}{\psi}}\|_\infty\le 1$.
Then by Lemma \ref{MultiplierInductionStep} (a) we have
$$\|\psi^2 e^{-\frac{\varphi}{\psi}}\|_{\Mult(B^1_\om)}\le C(\|\varphi\|_{\Mult(B^1_\om)}+\|\psi\|_{\Mult(B^1_\om)})^2$$

Repeated application of Lemma \ref{MultiplierInductionStep} (a) implies that $\psi^{2N} e^{-\frac{\varphi}{\psi}} \in \Mult(B^N_\om)$ with
$$\|\psi^{2N} e^{-\frac{\varphi}{\psi}}\|_{\Mult(B^N_\om)}\le C (\|\varphi\|_{\Mult(B^N_\om)}+\|\psi\|_{\Mult(B^N_\om)})^{2N}.$$ Here, of course, the constant has changed, but it only depends on $N$.
\end{proof}


\section{Logarithms and iterated logarithms} We are now going to work with logarithms and make our way  towards proving Theorem \ref{thm1}.  For later reference we note that if $f$ and $g$ are zero-free analytic functions such that $g$ has bounded argument, then $fg$ has bounded argument, if and only if $f$ has. For technical reasons it will be convenient to make the following definition:
$$N^+_{ba}(\HH)= \{\frac{\varphi}{\psi}: \varphi, \psi \in \Mult(\HH), \psi \text{ cyclic and has bounded argument}\}.$$
Note that Theorem \ref{SarasonAnalog} implies that if $\HH$ is a complete Pick space, then $\HH\subseteq N^+_{ba}(\HH)$.

\begin{theorem} \label{logTheo} Let $\HH$ be a radially weighted Besov space that also is a complete Pick space.

Let $f\in H^\infty$ be zero free and have bounded argument.
If $\log f\in N^+(\HH)$, then $f, 1/f \in N^+(\HH)$.
\end{theorem}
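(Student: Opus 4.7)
The plan is to derive both conclusions from Theorem \ref{technicalTheo}, using a fixed representation $\log f = \varphi/\psi$ with $\varphi, \psi \in \Mult(\HH)$ and $\psi$ cyclic in $\HH$. Set $M = \log \|f\|_\infty$, so that $\mathrm{Re}(\log f) = \log|f| \le M$ on $\Bd$. For $f \in N^+(\HH)$, I apply Theorem \ref{technicalTheo} to the pair $(M\psi - \varphi, \psi)$: the quotient $(M\psi-\varphi)/\psi = M - \log f$ has pointwise non-negative real part, and the theorem produces $n \in \N$ such that $\psi^n e^{-(M - \log f)} = e^{-M} \psi^n f \in \Mult(\HH)$. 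Hence $\psi^n f \in \Mult(\HH)$, and since $\psi^n$ is cyclic, the identity $f = (\psi^n f)/\psi^n$ witnesses $f \in N^+(\HH)$.

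The conclusion $1/f \in N^+(\HH)$ is more delicate. A direct application of Theorem \ref{technicalTheo} would demand $\mathrm{Re}(\log f + C) \ge 0$ for some constant $C$, i.e., a pointwise lower bound on $|f|$, which is not part of the hypotheses. Instead I rewrite $1/f = \psi^n/(\psi^n f)$, noting that both numerator and denominator lie in $\Mult(\HH)$ by Step 1. The inclusion $1/f \in N^+(\HH)$ then reduces to showing that the denominator $\psi^n f$ is cyclic in $\HH$.

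The main obstacle is this cyclicity of $\psi^n f$. To establish it via Theorem \ref{Paper1:AbsoluteValuePickIntro}, I seek a cyclic $g \in \HH$ with $|g(z)| \le |\psi^n(z) f(z)|$ throughout $\Bd$. The natural candidate is $g = \psi^n h$, where $h \in \Mult(\HH)$ is a cyclic multiplier satisfying $|h| \le |f|$ pointwise: the product of cyclic multipliers is cyclic, and $|g| = |\psi|^n |h| \le |\psi^n f|$. The construction of $h$ is the technical heart of the argument, and the bounded-argument hypothesis enters here. From $\psi \log f = \varphi \in \Mult(\HH)$ one obtains the pointwise estimate $|f(z)| \ge e^{-C/|\psi(z)|}$ with $C = \|\varphi\|_\infty$, which suggests that $h$ should be taken of the form $\psi^k e^{-K/\psi}$ for appropriate constants $K, k$. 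Verifying that such an $h$ belongs to $\Mult(\HH)$ requires a further application of Theorem \ref{technicalTheo} (with the constant numerator $K$ and denominator $\psi$, whose real-part condition is arranged via a suitable modification of the representation of $\log f$ in $N^+(\HH)$, for instance by passing through Theorem \ref{SarasonAnalog} applied to the element $\varphi \in \Mult(\HH) \subseteq \HH$); the cyclicity of $h$ and the pointwise bound $|h| \le |f|$ then follow from the bounded-argument estimates combined with a second appeal to Theorem \ref{Paper1:AbsoluteValuePickIntro}.
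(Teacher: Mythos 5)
Your first step is correct and coincides with the paper's: writing $\log f=\varphi/\psi$ with $\psi$ cyclic and noting that $M-\log f$ has nonnegative real part, Theorem \ref{technicalTheo} yields $\psi^n f\in\Mult(\HH)$, hence $f\in N^+(\HH)$; and you correctly reduce $1/f\in N^+(\HH)$ to the cyclicity of $\psi^n f$. The gap lies in how you propose to prove that cyclicity. First, your candidate minorant does not minorize: from $\psi\log f=\varphi$ one gets $|f|\ge e^{-C/|\psi|}$, but $|\psi^k e^{-K/\psi}|=|\psi|^k e^{-K\,\mathrm{Re}(1/\psi)}$, and $\mathrm{Re}(1/\psi)=\mathrm{Re}(\psi)/|\psi|^2$ can be near zero (or negative) at points where $|\psi|$ is small, so the needed inequality $|\psi^k e^{-K/\psi}|\le e^{-C/|\psi|}\le|f|$ fails; no choice of $K$ and $k$ repairs this, as the two exponents are genuinely incomparable. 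Second, and more fatally, there is no reason for $h=\psi^k e^{-K/\psi}$ to be cyclic: in $H^2(\D)$ functions of this shape are modelled on singular inner functions, the canonical non-cyclic bounded functions, and obtaining the cyclicity of $h$ ``by a second appeal to Theorem \ref{Paper1:AbsoluteValuePickIntro}'' is circular, since that theorem consumes a cyclic minorant rather than producing one. Third, your argument never substantively uses the bounded-argument hypothesis: the estimate $|f|\ge e^{-C/|\psi|}$ holds equally for $f=e^{-(1+z)/(1-z)}$ in $H^2(\D)$, whose logarithm lies in $N^+(H^2(\D))$ but which is not cyclic, so any proof resting only on such a lower bound cannot succeed.

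The paper avoids pointwise minorants entirely. Writing $\log(1/f)=u/v$ with $v$ cyclic, it uses Lemma \ref{UniformSmirnovLemma} to produce multipliers $b_j$ with $b_jv\to1$ pointwise, $\|b_jv\|_\infty\le1$ and $\|b_jv\|_{\Mult(\HH)}\le c$, and observes that $v^nfe^{b_ju}=v^ne^{-\frac uv(1-b_jv)}\in[v^nf]$. The bounded argument enters exactly here, to bound $\mathrm{Re}\bigl(\frac uv(1-b_jv)\bigr)\ge-M$ from the identity $\mathrm{Re}\bigl(\frac uv(1-b_jv)\bigr)=\mathrm{Re}\frac uv\cdot\mathrm{Re}(1-b_jv)+\mim\frac uv\cdot\mim(b_jv)$, using $\mathrm{Re}\frac uv\ge0$, $\mathrm{Re}(1-b_jv)\ge0$, $|\mim\frac uv|=|\mim\log f|\le M$ and $|\mim(b_jv)|\le1$. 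Theorem \ref{technicalTheo}, applied with $\varphi=u(1-b_jv)+Mv$ and $\psi=v$, then gives a $j$-independent bound on $\|v^ne^{-\frac uv(1-b_jv)}\|_{\Mult(\HH)}$, whence $v^nfe^{b_ju}\to v^n$ weakly in $\HH$ and $v^n\in[v^nf]$; since $v^n$ is cyclic, so is $v^nf$, and $1/f=v^n/(v^nf)\in N^+(\HH)$. Some such approximation-plus-uniform-bound mechanism is what your proposal is missing.
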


\begin{proof} Without loss of generality we may assume that $\|f\|_\infty \le 1$ and we assume $| \mim \log f|\le M$. Since $\log f \in N^+(\HH)$, we can find multipliers $u,v\in \Mult(\HH)$ such that $v$ is cyclic and $\log \frac{1}{f} =\frac{u}{v}$.  Then note that
 Re $\frac{ u}{v} \ge 0$. Thus we can  use Theorem \ref{technicalTheo}  to choose a positive integer $n$ such that $v^n f\in \Mult(\HH)$. Since $v^n$ is cyclic this implies that $f\in N^+(\HH)$. We will show that $1/f\in N^+(\HH)$ by showing that $v^nf$ is cyclic in $\HH$ (see Theorem \ref{1overf}).

 By Theorem \ref{technicalTheo} we have \begin{align}\label{fbeta} \|v^nf\|_{\Mult(\HH)} \le C(\|u\|_{\Mult(\HH)}+\|v\|_{\Mult(\HH)})^{n}.\end{align}

The cyclicity of $v$ implies that there are multipliers $p_j$ such that $p_jv\to 1$ in $\HH$. By Lemma \ref{UniformSmirnovLemma} and the remark following it there is a constant $c>0$ and there are multipliers $b_j$ such that $b_j(z)v(z) \to 1$ pointwise as $j\to \infty$, $\|b_jv\|_\infty\le 1$, and $\|b_j v\|_{\MH}\le c$ for each $j$. Then $\text{Re}(1- b_j v)\ge 0$ and since $e^{ b_j u}\in \Mult(\HH)$ we have
$$v^nf e^{  b_ju}\in [v^nf].$$
Note that $v^nf e^{ b_ju}= v^n e^{-\frac{ u}{v}(1- b_j v)}$ and
\begin{align*}\text{Re}\left(\frac{ u}{v}(1- b_j v)\right)&=  \left(\text{Re}\frac{ u}{v}\right)\left(\text{Re}(1- b_j v)\right)+ \left(\mim\frac{ u}{v}\right)(\mim ( b_jv))\\ & \ge 0 -  |\mim \log f| \ge -M.\end{align*}
Thus we can apply Theorem \ref{technicalTheo}  with $\varphi= u(1- b_j v) +Mv$ and $\psi=v$ and obtain
 for each $j$
 \begin{align*}\|v^n  e^{-\frac{ u}{v}(1- b_j v)}\|_{\Mult(\HH)}&\le Ce^M(\| u(1- b_j v) +Mv\|_{\Mult(\HH)}+\|v\|_{\Mult(\HH)})^{n}\\
 &\le Ce^M((1+c)\|u\|_{\Mult(\HH)}+(M+1)\|v\|_{\Mult(\HH)})^n.\end{align*} Thus $v^nf e^{ b_ju}\to v^n $ weakly in $\HH$ as $j\to \infty$. This implies $v^n \in [v^nf]$. But $v^n$ is cyclic, hence  $v^nf$ is cyclic and  $1/f\in N^+(\HH)$.
\end{proof}
\begin{lemma} \label{iterate} Let $\HH$ be a radially weighted Besov space  that also is a complete Pick space, and let $G(z)=\log(1+z)$ be defined in the right half plane. Here $\log$ denotes the principal branch.

Let $F\in \Hol(\Bd)$ with Re$F(z) > 0$ for all $z\in \Bd$.

(a) If
$G \circ F \in N^+(\HH)$, then $F \in N^+(\HH)$.

(b) Conversely, if $F \in N^+_{ba}(\HH)$, then $G\circ F\in N^+_{ba}(\HH)$.
\end{lemma}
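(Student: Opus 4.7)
For part (a), the key idea is to apply Theorem \ref{logTheo} to the reciprocal $f := 1/(1+F)$. Since $\mathrm{Re}\,F > 0$ forces $\mathrm{Re}(1+F) > 1$, we have $|1+F| \ge 1$ and $|\arg(1+F)| < \pi/2$. Hence $f \in H^\infty$ with $\|f\|_\infty \le 1$, $f$ is zero-free on $\Bd$, and $f$ has bounded argument. Moreover, $\log f = -\log(1+F)$ lies in $N^+(\HH)$ by hypothesis (as $N^+(\HH)$ is closed under negation), so Theorem \ref{logTheo} yields both $f$ and $1/f = 1+F$ in $N^+(\HH)$. Subtracting the constant $1$ then gives $F \in N^+(\HH)$.

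For part (b), write $F = \varphi/\psi$ with $\varphi,\psi \in \Mult(\HH)$, $\psi$ cyclic and of bounded argument. Then $\log(1+F) = \log(\psi+\varphi) - \log\psi$ for compatible branches, and since $N^+_{ba}(\HH)$ is easily checked to be closed under subtraction (products of cyclic multipliers are cyclic, and sums of bounded arguments are bounded), it suffices to place both $\log\psi$ and $\log(\psi+\varphi)$ in $N^+_{ba}(\HH)$. The factorization $\psi+\varphi = \psi(1+F)$ together with $|1+F|\ge 1$ and $|\arg(1+F)|<\pi/2$ immediately shows that $\psi+\varphi$ has bounded argument, while Theorem \ref{Paper1:AbsoluteValuePickIntro} applied to $|\psi| \le |\psi+\varphi|$ shows that $\psi+\varphi$ is cyclic. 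So both $\psi$ and $\psi+\varphi$ satisfy the hypotheses of the sublemma below.

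\emph{Sublemma.} If $\tau \in \Mult(\HH)$ is cyclic and has bounded argument, then $\log\tau \in N^+_{ba}(\HH)$. I would prove this by first verifying $\tau\log\tau \in H^\infty$: the bounded argument hypothesis gives $|\tau\log\tau| \le |\tau|(|\log|\tau|| + C)$, and the map $t \mapsto t|\log t|$ is bounded on the finite interval $(0,\|\tau\|_\infty]$. Lemma \ref{MultiplierInductionStep}(b), applied with $j = 1$, then produces an integer $n$ with $\tau^n \log\tau \in \Mult(\HH)$. Since $\tau^n$ is itself a cyclic multiplier with bounded argument, the representation $\log\tau = (\tau^n\log\tau)/\tau^n$ places $\log\tau$ in $N^+_{ba}(\HH)$. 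I expect this sublemma to be the main technical obstacle, as it is the one point where a genuine piece of the multiplier machinery (Lemma \ref{MultiplierInductionStep}(b)) is needed to promote an $H^\infty$ bound into multiplier membership; everything else in the proof is routine bookkeeping with the preliminaries from Sections 2 and 3.
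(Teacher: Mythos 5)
Your proof is correct and follows essentially the same route as the paper's: part (a) is the identical application of Theorem \ref{logTheo} to $1/(1+F)$, and part (b) uses the same ingredients (Theorem \ref{Paper1:AbsoluteValuePickIntro} to get cyclicity of $\psi+\varphi$ from $|\psi|\le|\psi+\varphi|$, boundedness of the argument of $\psi(1+F)$, the $H^\infty$ bound on $\tau\log\tau$, and Lemma \ref{MultiplierInductionStep}(b)). The only difference is organizational: you package the conclusion as a sublemma for a single cyclic bounded-argument multiplier and invoke closure of $N^+_{ba}(\HH)$ under subtraction, whereas the paper combines both terms at once over the common cyclic denominator $v^n(u+v)^n$.
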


\begin{proof} (a) Since Re$F(z)\ge 0$ we have $\frac{1}{1+F}\in H^\infty$ and it is clear that $|\mim\log \frac{1}{1+F}|\le \pi/2$, hence Theorem \ref{logTheo} applies and we conclude that $1+F\in N^+(\HH)$. This implies that $F \in N^+(\HH)$.

(b) Suppose $F=\frac{u}{v}$ for multipliers $u,v$ where $v$ is cyclic and has bounded argument. Since $1\le |1+F|=|1+u/v|$ we have $|v| \le |u+v|$ in $\Bd$. By Theorem \ref{Paper1:AbsoluteValuePickIntro} the cyclicity of $v$ implies that $u+v$ is cyclic. Then we note that for each positive integer $n$ we have that $v^n(u+v)^n$ is a cyclic multiplier as well. Clearly $1+F$ has bounded argument, hence the assumption on $v$ implies that $u+v=v(1+F)$ has bounded argument also, and hence so does $v^n(u+v)^n$ for each positive integer $n$. Furthermore, if a multiplier $\varphi$ has no zeros and bounded argument, then $\varphi \log \varphi \in H^\infty$. By Lemma \ref{MultiplierInductionStep}(b) there is an integer $n$ such that $\varphi^n \log \varphi \in \Mult(\HH)$. We apply this with $\varphi=v$ and also with $\varphi=u+v$ and conclude that there is $n$ such that $$v^n(u+v)^n(log(u+v)-log v) \in \Mult(\HH).$$ (b) follows from this.
\end{proof}

We now prove Theorem~\ref{thm1}, which we restate for convenience. Recall that $G_1(z)=z$ and $G_{n+1}(z)=G(G_n(z))$ for $n\ge 1$.
\begin{theo} Let $\HH$ be a radially weighted Besov space that  also is a complete Pick space. If $f\in \HH$ is zero-free on $\Bd$ and has bounded argument, then
	
	(a)  $\log f \in N^+(\HH)$ if and only if $f$ is cyclic in $\HH$,
	
	(b) if also $\|f\|_\infty \le 1$, then the following are equivalent:
	\begin{enumerate}
		\item $f$ is cyclic in $\HH$,
		\item there is $n \in \N$ such that $G_n\circ \log(1/f)\in N^+(\HH)$,
		\item for every $n \in \N$,  $G_n\circ \log(1/f)\in N^+(\HH).$
	\end{enumerate}
\end{theo}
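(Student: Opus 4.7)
My plan is to first prove part (a) in a strengthened form that actually places $\log f$ in the subclass
\[
N^+_{ba}(\HH) = \{\varphi/\psi : \varphi,\psi \in \Mult(\HH),\ \psi \text{ cyclic with bounded argument}\},
\]
so that part (b) can be obtained by iterating Lemma \ref{iterate}, which is tailored precisely to preserve $N^+_{ba}(\HH)$. The degenerate case $|f(z_0)|=1$ at some $z_0 \in \Bd$ is disposed of by the maximum modulus principle, which forces $f$ to be a unimodular constant for which the statement is trivial. In what follows I assume $|f|<1$ strictly, so that $F := \log(1/f)$ satisfies $\operatorname{Re} F > 0$ on $\Bd$.

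For part (a), I would rescale to $\|f\| \le 1$ in the equivalent Pick-kernel norm and apply Theorem \ref{SarasonAnalog} to factor $f = \varphi/(1-\psi)$ with contractive multipliers $\varphi,\psi$ and $\psi(w_0)=0$. Then $\varphi$ is zero-free on $\Bd$, and $\arg\varphi = \arg f + \arg(1-\psi)$ is bounded because $\operatorname{Re}(1-\psi) > 0$ on $\Bd$ forces $|\arg(1-\psi)| < \pi/2$; moreover $[f]=[\varphi]$ by Lemma \ref{multIndep}(b). For the reverse direction, Lemma \ref{log(1-phi)} gives $\log(1-\psi) \in N^+(\HH)$, so $\log f \in N^+(\HH)$ forces $\log\varphi \in N^+(\HH)$, and Theorem \ref{logTheo} applied to the bounded function $\varphi$ yields $1/\varphi \in N^+(\HH)$; hence $1/f = (1-\psi)/\varphi \in N^+(\HH)$ and Theorem \ref{1overf} gives cyclicity. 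For the forward direction, cyclicity of $f$ yields cyclicity of $\varphi$; since $\varphi$ is a zero-free multiplier with $\|\varphi\|_\infty \le 1$ and bounded argument, the function $\varphi\log\varphi$ lies in $H^\infty$, so Lemma \ref{MultiplierInductionStep}(b) produces an integer $n$ with $\varphi^n \log\varphi \in \Mult(\HH)$. Writing $\log\varphi = (\varphi^n\log\varphi)/\varphi^n$ exhibits $\log\varphi \in N^+_{ba}(\HH)$, since $\varphi^n$ is a cyclic multiplier with bounded argument. The same device, via the identity from the proof of Lemma \ref{log(1-phi)}, places $\log(1-\psi)$ in $N^+_{ba}(\HH)$, and since $N^+_{ba}(\HH)$ is closed under subtraction (products of cyclic multipliers with bounded argument are again such), $\log f = \log\varphi - \log(1-\psi)$ lies in $N^+_{ba}(\HH)$.

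For part (b), the key observation is that $G(w) = \log(1+w)$ maps the open right half-plane into itself, so $\operatorname{Re}(G_n\circ F) > 0$ inductively for every $n$. The implication (i)$\Rightarrow$(iii) then follows by induction on $n$: the strengthened form of (a) supplies the base case $G_1\circ F = F \in N^+_{ba}(\HH)$, and Lemma \ref{iterate}(b) propagates $G_n\circ F \in N^+_{ba}(\HH)$ to $G_{n+1}\circ F = G\circ(G_n\circ F) \in N^+_{ba}(\HH)$. The implication (iii)$\Rightarrow$(ii) is trivial, while (ii)$\Rightarrow$(i) follows by descending induction through $n-1$ applications of Lemma \ref{iterate}(a), which reduces $G_n\circ F \in N^+(\HH)$ to $F \in N^+(\HH)$ and hence yields cyclicity through part (a). The principal obstacle is the strengthened forward direction of (a): one must not merely represent $\log f$ as a ratio of multipliers with cyclic denominator, but also arrange for the denominator to have bounded argument, and Lemma \ref{MultiplierInductionStep}(b) is precisely the tool that converts the bounded-argument hypothesis on the multiplier factor into ordinary multiplier membership after passing to a suitable power.
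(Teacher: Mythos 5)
Your proposal is correct and follows essentially the same route as the paper: the Sarason-type factorization $f=\varphi/(1-\psi)$ from Theorem \ref{SarasonAnalog}, Lemma \ref{log(1-phi)} and Theorem \ref{logTheo} for one direction, Lemma \ref{MultiplierInductionStep}(b) to land $\log f$ in $N^+_{ba}(\HH)$ for the other, and iteration of Lemma \ref{iterate} for part (b). The only difference is presentational: the paper reduces to the case where $f$ is itself a contractive multiplier and argues there, whereas you carry $\varphi$ and $1-\psi$ separately and verify explicitly that $N^+_{ba}(\HH)$ is closed under subtraction --- a detail the paper leaves implicit when it asserts in (b) that cyclicity of $f$ is equivalent to $F\in N^+_{ba}(\HH)$.
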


\begin{proof}
(a) It suffices to prove Theorem in the case when $\|f\|\le 1$. Then by Theorem \ref{SarasonAnalog} we know that $f=\frac{\varphi}{1-\psi}$ for contractive multipliers $\varphi, \psi$ with $\psi \ne 1$. In Lemma \ref{log(1-phi)} we had shown that $\log (1-\psi) \in N^+(\HH)$. Furthermore, since Re $1-\psi\ge 0$ it is clear that the argument of $f$ is bounded, if and only if the argument of $\varphi$ is bounded. Thus, since  $[f]=[\varphi]$ it suffices to prove (a) under the additional assumption that $f$ is a multiplier with $\|f\|_{\Mult(\HH)}\le 1$.

In that case $f \in H^\infty$. Thus, if $\log f \in N^+(\HH)$, then the cyclicity of $f$ follows from Theorem \ref{logTheo}.
For the other direction we suppose that $f$ is a cyclic multiplier.
Since $f$ is bounded it follows that $\left|f \log |f|\right|$ is bounded. Thus the hypothesis implies that $f \log f \in H^\infty$. Then  Lemma \ref{MultiplierInductionStep}(b)  implies that $\log f =\frac{w}{f^n}$ for some natural number $n$ and some $w\in \Mult(\HH)$. Since $f^n$ is cyclic and has bounded argument we conclude that $\log f \in N^+_{ba}(\HH) \subseteq N^+(\HH)$.

(b) We assume that $f$ is not constant. Set $F= \log (1/f)$. Since $\|f\|_\infty \le 1$ we have Re $F(z)>0$ in $\Bd$. From the proof of (a) it follows that $f$ is cyclic in $\HH$, if and only if $F\in N^+_{ba}(\HH)$. Thus (b) follows by an iterated application of Lemma \ref{iterate}.
\end{proof}

\section{Polynomials}\label{polys}
Recall that a stable polynomial is a polynomial without zeros in $\Bd$. First we note that stable polynomials always have bounded argument.
\begin{lemma}If $p$ is a  polynomial in $d$ variables of total degree $\le n$, and if $p(z)\ne 0$ for all $z\in \Bd$, then $|\mim \log p(z)- \log p(0)|\le n\pi$.
\end{lemma}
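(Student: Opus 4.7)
The plan is to reduce to the one-variable case by restricting $p$ to the complex line through the origin and a fixed point $z$, and then to exploit the standard factorization of a one-variable polynomial in terms of its roots. Since $\Bd$ is simply connected and $p$ is zero-free on $\Bd$, a single-valued branch of $\log p$ exists on $\Bd$, so $\mim(\log p(z) - \log p(0))$ is well-defined as the change in argument along any path from $0$ to $z$; the path I would use is the radial segment.

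First I would fix $z \in \Bd \setminus \{0\}$, set $w = z/|z| \in \partial \Bd$, and consider the one-variable polynomial $q(\zeta) = p(\zeta w)$. Since $p$ has total degree $\le n$, so does $q$ in $\zeta$; and since $|\zeta w| = |\zeta|$, the polynomial $q$ is non-vanishing in $\D$. Thus $q$ factors as $q(\zeta) = q(0)\prod_{j=1}^{k}(1-\zeta/\zeta_j)$ with $k\le n$ and $|\zeta_j|\ge 1$ for every $j$. The radial path $t\mapsto tz$ in $\Bd$ corresponds to the segment $\zeta \in [0,|z|]$ in the $\zeta$-plane, along which, for each $j$, the principal branch of $\log(1-\zeta/\zeta_j)$ is well-defined because $\mathrm{Re}(1-\zeta/\zeta_j) > 1 - |\zeta|/|\zeta_j| > 0$.

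Tracking the imaginary part of $\log p$ along the radial path gives
\[
\mim\bigl(\log p(z)-\log p(0)\bigr) \;=\; \sum_{j=1}^{k}\mim\log\!\left(1-|z|/\zeta_j\right).
\]
Because each factor $1-|z|/\zeta_j$ lies in the open right half-plane, each summand has absolute value strictly less than $\pi/2$. Summing, $|\mim(\log p(z)-\log p(0))| < k\pi/2 \le n\pi/2 \le n\pi$, which is in fact a slightly stronger bound than asserted. There is no substantive obstacle here; the only conceptual point is to restrict to a complex line so that the multivariate degree assumption converts into a degree bound on the number of one-variable factors, each of which contributes controlled argument because its root lies outside $\D$.
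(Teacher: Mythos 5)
Your proof is correct and follows essentially the same route as the paper's: slice $p$ along the complex line through $0$ and $z$ to reduce to a one-variable stable polynomial, factor it into at most $n$ linear factors with roots outside $\D$, and bound the argument change of each factor along the radial segment. The only difference is cosmetic --- the paper integrates the logarithmic derivative $q'/q=\sum_i (\lambda-A_i)^{-1}$ and bounds each term's contribution by $\pi$, whereas your observation that each normalized factor $1-\zeta/\zeta_j$ stays in the open right half-plane yields the slightly sharper constant $n\pi/2$.
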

\begin{proof} First let $q(\lambda)$ be a single variable polynomial of degree equal to $ n$ such that $q$ has no zeros in $\D$. Then there are $A_1,...,A_n$ such that $|A_i|\ge 1$ and $$\frac{q'(\lambda)}{q(\lambda)}=\sum_{i=1}^n \frac{1}{\lambda-A_i}.$$ Then for $|z|<1$
$$\log q(z)- \log q(0) =\int_{[0,z]}\frac{q'(\lambda)}{q(\lambda)}d\lambda= \sum_{i=1}^n \int_{[0,z]}\frac{1}{\lambda-A_i}d\lambda.$$ Hence
$$|\mim(\log q(z)- \log q(0))| \le \sum_{i=1}^n | \mim \int_{[0,z]}\frac{1}{\lambda-A_i}d\lambda|\le n\pi.$$ Now if $p$ is a polynomial in $d$ variables of total degree $\le n$, then for each $z\in \Bd$ the function $q_z(\lambda)=p(\lambda z)$ is a single variable polynomial with $\log p(z)=\log q_z(1)$ and $\log p(0)=\log q_z(0)$. Thus the result follows from the one variable case.
\end{proof}

There are some situations where Theorem \ref{thm1} can be used to show that stable polynomials are cyclic. We will use slice functions and start with two lemmas about single variable functions.

\begin{lemma}\label{singleFactor} For $x\ge 1$ let $h(x)= \frac{x^2}{(1+\log x)^2}$.
Then
$$\int_{\D}h\left(\frac{2|\lambda|}{|z-\lambda|}\right)\frac{dA(z)}{\pi}\le 16$$ for all $\lambda\in \C$ with $|\lambda|\ge 1$.
\end{lemma}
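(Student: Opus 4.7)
The plan is to pass to polar coordinates centered at $\lambda$ and reduce the bound to an elementary one--variable estimate. First, by rotational symmetry of $\D$ and of the integrand, I may assume $\lambda = a \in [1,\infty)$. Setting $z = a + re^{i\theta}$ and defining $A(r) = |\{\theta \in [0,2\pi) : a + re^{i\theta} \in \D\}|$, the integral rewrites as
$$\frac{1}{\pi}\int_{\max(0,\,a-1)}^{a+1} h\!\left(\tfrac{2a}{r}\right) r A(r)\, dr.$$

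The first key step is to establish the geometric bound $rA(r) \le \pi\min(r,1)$. I would use two complementary observations: (i) since $a \ge 1$ places the center $\lambda$ outside (or on the boundary of) $\D$, the arc $\partial B(\lambda,r)\cap \D$ spans an angle at most $\pi$ at $\lambda$, giving $A(r) \le \pi$ and hence $rA(r) \le \pi r$; (ii) the chord joining the two endpoints of that arc lies in $\overline{\D}$ and therefore has length at most $2$, and since the arc is no more than a semicircle the ratio of arc length to chord length is at most $\pi/2$, giving $rA(r) \le \pi$.

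The second key ingredient is the closed--form primitive
$$\int h\!\left(\tfrac{2a}{r}\right) r\, dr \;=\; \frac{4a^2}{1 + \log(2a/r)} + C,$$
obtained via $u = 2a/r$. Writing $L := 1 + \log(2a)$ and splitting the $r$--integral at $r = 1$: on $r \le 1$ I would combine this primitive with $rA(r) \le \pi r$ to bound the contribution by $4a^2/L$; on $r \ge 1$ I would combine $rA(r) \le \pi$ with the monotonicity of $h$ on $[1,\infty)$ (which is elementary from $h'(x) = 2x\log x/(1+\log x)^3$) to bound the contribution by $a\,h(2a) = 4a^3/L^2$ when $a \le 2$ and by $2\,h(2a/(a-1))$ when $a \ge 2$.

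The endgame is then a short case analysis. For $1 \le a \le 2$, the total is at most $4a^2/L + 4a^3/L^2$, and the elementary inequality $a \le L$ (since $L - a = 1 + \log(2a) - a$ decreases on $[1,\infty)$ and is still positive at $a = 2$) reduces this to $8a^2/L \le 8a \le 16$. For $a \ge 2$, $2a/(a-1) \in (2,4]$ and $h(4) = 16/(1+\log 4)^2 \le 4$ (using $1 + \log 4 \ge 2$), so the bound is at most $8$. The main obstacle is the geometric lemma: a naive estimate such as $|z-\lambda| \ge |\lambda|-|z|$ already produces a divergent one--variable integral at $|\lambda| = 1$, so the factor $\min(r,1)$ coming from the planar geometry of the arc near $\partial \D$ is what keeps everything integrable and under $16$.
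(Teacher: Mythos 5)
Your argument is correct, and it takes a genuinely different route from the paper's. The paper splits into $|\lambda|\ge 2$ (where $2|\lambda|/|z-\lambda|\le 4$ uniformly, so a pointwise bound $h(4)\le 16$ suffices) and $1\le|\lambda|\le 2$ (where it compares $\lambda$ with the boundary point $\lambda/|\lambda|$, using $|z-\lambda|\ge|z-\lambda/|\lambda||$, and then integrates $h(4/|w|)$ over the full disc $\{|w|\le 2\}$ by the same logarithmic antiderivative you use). You instead work in polar coordinates centered at $\lambda$ and prove the geometric estimate $rA(r)\le\pi\min(r,1)$ for the arc of $\partial B(\lambda,r)$ lying in $\D$, which I have checked (the angle bound follows from a supporting/separating line through $\lambda$, and the arc-to-chord ratio $\tfrac{\alpha/2}{\sin(\alpha/2)}\le\pi/2$ for $\alpha\le\pi$ gives $rA(r)\le\pi$); your antiderivative $\tfrac{4a^2}{1+\log(2a/r)}$ and the subsequent case analysis, including $a\le 1+\log(2a)$ on $[1,2]$ and $h(2a/(a-1))\le h(4)\le 4$ for $a\ge 2$, are all correct. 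What your approach buys is precision near $\partial\D$: you exploit that only about half of each circle centered at $\lambda$ meets $\D$, whereas the paper's comparison integrates over the whole disc $\{|w|\le2\}$; if one evaluates that integral carefully it equals $32/(1+\log 2)\approx 18.9$, not the displayed $2/(1+\log 2)$, so the paper's argument as written actually only yields a constant slightly larger than $16$ (harmless for the applications, where only $O(1)$ matters, but your proof is the one that genuinely delivers the stated bound of $16$).
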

\begin{proof}
Note that  $h$ is increasing.
Hence, if $|\lambda|\ge 2$, then for all $z\in \D$ we have
$$h\left(\frac{2|\lambda|}{|z-\lambda|}\right)\le h\left(\frac{2|\lambda|}{|\lambda|-1}\right)\le h(4)\le 16.$$ Thus, the lemma holds at least for $|\lambda|\ge 2$.

If $1\le |\lambda|\le 2$ and $z\in \D$, then
$$h\left(\frac{2|\lambda|}{|z-\lambda|}\right)\le h\left(\frac{4}{|z-\frac{\lambda}{|\lambda|}|}\right)$$ and hence
\begin{align*}\int_{\D} h\left(\frac{2|\lambda|}{|z-\lambda|}\right)    \frac{dA(z)}{\pi}&\le \int_{\D} h\left(\frac{4}{|z-1|}\right) \frac{dA(z)}{\pi}\\
&\le \int_{|w|\le2}h(\frac{4}{|w|}) \frac{dA(w)}{\pi}\\&=\frac{2}{1+\log 2}\le 16. \end{align*}
\end{proof}
Note that if  $p$ is a stable polynomial of degree $ n$, then there are $\lambda_1, \dots, \lambda_n\in \C\setminus \D$ and $c\in \C, c\ne 0$ such that $p(z)=c\prod_{j=1}^n (z-\lambda_j)$. Then $$|p(z)|\le |c| \prod_{j=1}^n (1+|\lambda_j|)\le 2^n|c| \prod_{j=1}^n |\lambda_j|=2^n|p(0)|$$ for all $z\in \D$. Hence, if  $p$ is any stable polynomial of degree $\le  n$, then the polynomial $q(z)=p(z)/(2^np(0))$ satisfies $\|q\|_\infty \le 1$.
\begin{lemma}\label{lem:LogPolyDiri} Let $p$ be a stable polynomial of a single variable and  of degree $\le n$. Set $F(z)=\log(1+\log\frac{2^n p(0)}{p(z)}),$ then $\int_\D|F'(z)|^2 \frac{dA}{\pi} \le 16 n^2 $.
\end{lemma}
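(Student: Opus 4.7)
The plan is to reduce the integral estimate to a sum of single-factor integrals governed by Lemma~\ref{singleFactor}. Factor $p(z) = c\prod_{j=1}^n (z - \lambda_j)$ with $|\lambda_j| \ge 1$ by stability. Writing $g(z) = \log\frac{2^n p(0)}{p(z)}$, which is well-defined and analytic on $\D$ (since $p$ is zero-free there and we can fix the branch so that $g(0) = n\log 2$), we have $F = \log(1 + g)$. A direct computation using the partial fraction expansion of $p'/p$ gives
$$g'(z) = \sum_{j=1}^n \frac{1}{\lambda_j - z}, \qquad \mathrm{Re}\, g(z) = \sum_{j=1}^n \log\frac{2|\lambda_j|}{|\lambda_j - z|}.$$
Set $a_j = |\lambda_j - z|^{-1}$ and $u_j = \log(2|\lambda_j|/|\lambda_j - z|)$. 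Since $|\lambda_j|/(|\lambda_j|+1) \ge 1/2$ for $|\lambda_j| \ge 1$ and $|z| < 1$, one has $u_j \ge 0$, which also ensures $\mathrm{Re}(1+g) \ge 1$, so $F$ is well-defined and $|1+g(z)| \ge 1 + \sum_j u_j$.

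The key pointwise estimate is
$$|F'(z)|^2 = \frac{|g'(z)|^2}{|1 + g(z)|^2} \le \frac{(\sum_j a_j)^2}{(1 + \sum_j u_j)^2} \le n\sum_{j=1}^n \frac{a_j^2}{(1+u_j)^2}.$$
The last inequality follows from Cauchy--Schwarz, $(\sum_j a_j)^2 \le \bigl(\sum_j a_j^2/(1+u_j)^2\bigr)\bigl(\sum_j (1+u_j)^2\bigr)$, combined with the termwise bound $(1+u_j)^2 \le (1+\sum_k u_k)^2$, which holds because $u_k \ge 0$, and yields $\sum_j(1+u_j)^2 \le n(1+\sum_k u_k)^2$.

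Finally, in each summand substitute $x_j = 2|\lambda_j|/|\lambda_j - z|$, noting $x_j \ge 1$, so that $a_j^2/(1+u_j)^2 = h(x_j)/(4|\lambda_j|^2)$. Lemma~\ref{singleFactor} then gives
$$\int_\D \frac{a_j^2}{(1+u_j)^2}\,\frac{dA}{\pi} = \frac{1}{4|\lambda_j|^2}\int_\D h\!\left(\frac{2|\lambda_j|}{|z-\lambda_j|}\right)\frac{dA}{\pi} \le \frac{16}{4|\lambda_j|^2} \le 4,$$
and summing over $j$ produces $\int_\D |F'|^2\, dA/\pi \le 4n^2 \le 16n^2$. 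The only real obstacle is recognizing that a single application of Cauchy--Schwarz, exploiting the sign condition $u_j \ge 0$, cleanly linearizes the pointwise estimate into a sum of pieces each controlled by Lemma~\ref{singleFactor}; the rest is bookkeeping.
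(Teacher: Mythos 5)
Your proof is correct and follows essentially the same route as the paper: factor $p$, bound $|1+g(z)|$ from below by $1+\sum_j u_j$ via the real part of the logarithm, apply Cauchy--Schwarz to reduce the pointwise estimate to $n$ single-factor terms of the form $h\bigl(2|\lambda_j|/|z-\lambda_j|\bigr)$, and integrate using Lemma~\ref{singleFactor}. The only omission is the case $\deg p = k < n$, since your factorization assumes exactly $n$ roots; the paper disposes of this by noting $1+\log\frac{2^n|p(0)|}{|p(z)|}\ge 1+\log\frac{2^k|p(0)|}{|p(z)|}$ and invoking the degree-$k$ case, and your argument adapts in the same trivial way.
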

\begin{proof} First assume that the degree of $p$ equals $n$. Then there is $c\in \C$ and  $\lambda_1, \dots, \lambda_n\in \C\setminus \D$ such that $p(z)=c \prod_{j=1}^n (z-\lambda_j)$. Then
$$|1+\log\frac{2^n p(0)}{p(z)}|\ge 1+\log\frac{2^n |p(0)|}{|p(z)|}=1+\sum_{j=1}^n \log\frac{2|\lambda_j|}{|z-\lambda_j|} \ge 1+ \log\frac{2|\lambda_{k}|}{|z-\lambda_{k}|}$$ for all $z\in \D$ and $1\le k\le n$. Then
\begin{align*}
|F'(z)|^2&= \frac{1}{|1+\log\frac{2^n p(0)}{p(z)}|^2}\left|\frac{p'(z)}{p(z)}\right|^2\\
&\le \frac{1}{(1+\log\frac{2^n |p(0)|}{|p(z)|})^2}|\sum_{k=1}^n \frac{1}{z-\lambda_k}|^2\\
&\le n \sum_{k=1}^n \frac{1}{(1+\log\frac{2^n |p(0)|}{|p(z)|})^2}\frac{1}{|z-\lambda_k|^2}\\
&\le n  \sum_{k=1}^n \frac{1}{(1+\log\frac{2|\lambda_k|}{|z-\lambda_k|})^2}\frac{4|\lambda_k|^2}{|z-\lambda_k|^2}
\end{align*}
Thus, if the degree of $p$ equals $n$, then the result now follows from Lemma \ref{singleFactor}.

If the degree is $k<n$, then $1+\log \frac{2^n|p(0)|}{|p(z)|}\ge 1+\log \frac{2^k|p(0)|}{|p(z)|}$ and the Lemma follows by taking $n=k$ in the case that we proved above.
\end{proof}
Lemma \ref{lem:LogPolyDiri} together with Theorem \ref{thm1} provides a proof that polynomials without zeros in $\D=\mathbb{B}_1$ are cyclic in the Dirichlet space $D$. That is known, but it is interesting that the membership in $N^+(D)$ can be established by membership in $D$. Similarly, the following Theorem implies a new proof that for $d=2$ all polynomials without zeros in $\Bd$ are cyclic in $H^2_d$, see \cite{KosinskiVavitsas} or \cite{CyclicWeighedBesov_I}, \Theoonesix.

\begin{theorem}\label{stablePolys} If $\om$ is an admissable measure on $\Bd$ of the type $$d\om(\zeta)= u(r)2rdrd\sigma(z), \zeta=rz,$$ for some $u\in L^\infty[0,1]$, and if $p$ is a stable polynomial, then $$F(z)=\log(1+\log(\frac{2^n p(0)}{p(z)}))\in B^1_\om,$$ where $n\ge $ the degree of $p$.
\end{theorem}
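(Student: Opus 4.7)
The plan is to reduce to the single-variable estimate in Lemma \ref{lem:LogPolyDiri} via slice integration. For each $w\in\dB$, I would introduce the slice polynomial $p_w(\lambda):=p(\lambda w)$, which is a single-variable polynomial of degree $\le n$ (since the total degree of $p$ is $\le n$), satisfies $p_w(0)=p(0)$, and has no zeros in $\D$ (because $\lambda w\in\Bd$ for $\lambda\in\D$). The same slicing argument used after Lemma \ref{singleFactor} shows $|p(z)|\le 2^n|p(0)|$ throughout $\Bd$, so $F$ is a well-defined holomorphic function on $\Bd$ and $F(0)=\log(1+n\log 2)$ is finite. Setting $F_w(\lambda):=F(\lambda w)=\log(1+\log(2^n p_w(0)/p_w(\lambda)))$, Lemma \ref{lem:LogPolyDiri} applied to $p_w$ yields
\[
\int_\D |F_w'(\lambda)|^2\,\frac{dA(\lambda)}{\pi}\le 16n^2
\]
uniformly in $w\in\dB$.

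Next, I would express the radial derivative through slices. For $z=rw$ with $r\in[0,1)$ and $w\in\dB$, writing $g(\lambda):=F(\lambda w)=F_w(\lambda)$ gives $\lambda g'(\lambda)=(RF)(\lambda w)$, so $|RF(rw)|^2 = r^2|F_w'(r)|^2$. Unfolding the integral against $d\omega=u(r)\,2r\,dr\,d\sigma(w)$, using $r^2\le 1$ and $u\in L^\infty$, one obtains
\[
\|RF\|_{L^2(\omega)}^2 \;\le\; \|u\|_\infty\int_{\dB}\int_0^1 |F_w'(r)|^2\,2r\,dr\,d\sigma(w).
\]

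The final step is to promote the radial integral on the right to a disk integral by rotational averaging, which is the only mildly nontrivial point of the argument. The map $w\mapsto e^{i\theta}w$ preserves $\sigma$, and a direct computation gives $F_{e^{i\theta}w}'(r)=e^{i\theta}F_w'(re^{i\theta})$, so averaging in $\theta$ inside the $\sigma$-integral replaces $|F_w'(r)|^2$ by $\frac{1}{2\pi}\int_0^{2\pi}|F_w'(re^{i\theta})|^2\,d\theta$. Fubini and polar coordinates then rewrite the previous bound as
\[
\|RF\|_{L^2(\omega)}^2 \;\le\; \|u\|_\infty\int_{\dB}\frac{1}{\pi}\int_\D |F_w'(\lambda)|^2\,dA(\lambda)\,d\sigma(w) \;\le\; 16n^2\|u\|_\infty,
\]
using the uniform bound above and $\sigma(\dB)=1$. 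Together with the finite value of $F(0)$ and the definition of the $B_\omega^1$-norm, this shows $F\in B_\omega^1$.
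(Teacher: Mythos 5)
Your proposal is correct and follows essentially the same route as the paper: slice the function along complex lines through the origin, bound $|RF(rw)|$ by $|F_w'(r)|$, use rotational invariance of $\sigma$ to promote the radial integral to a disk integral, and invoke Lemma \ref{lem:LogPolyDiri} for the slice polynomials $p_w(\lambda)=p(\lambda w)$. The only difference is that you spell out the averaging identity $F_{e^{i\theta}w}'(r)=e^{i\theta}F_w'(re^{i\theta})$ and the finiteness of $F(0)$ explicitly, which the paper leaves implicit.
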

Recall that if $d=2$ and $\om=V=$ Lebesgue measure on $\mathbb{B}_2$, then $H^2_2=B^1_\om$. Hence the hypothesis of Theorem \ref{thm1} for stable polynomials can be verified by membership in $H^2_d$.
\begin{proof} It is clear that we may assume $u(r)=1$ for all $r\in [0,1]$. If $z\in \dB$, then write $F_z$ for the slice function $F_z(\lambda)=F(\lambda z)$. Note that $|RF(\lambda z)|=|\lambda (F_z)'(\lambda|\le |(F_z)'(\lambda)|$. Hence
\begin{align*}\int_{\Bd} |RF(\zeta)|^2 d\om(\zeta)&= \int_{\dB}\int_0^1 \int_0^{2\pi}|RF(e^{it}rz)|^2 \frac{dt}{2\pi}2rdrd\sigma(z)\\
&\le\int_{\dB} \int_\D |(F_z)'(\lambda)|^2\frac{dA(\lambda}{\pi} d\sigma(z)\\
&\le 16n^2
\end{align*}
by Lemma \ref{lem:LogPolyDiri} applied with the slice polynomials $p_z(\lambda)=p(\lambda z)$.
\end{proof}
 In Section 2 of \cite{CyclicWeighedBesov_I} it was shown that for $d\ge 3$ the polynomials $p_1(z)=1-3^{3/2}z_1z_2z_3$ and $p_2(z)=1-(z_1^2+z_2^2+z_3^2)$ are examples of stable polynomials that are cyclic in $H^2_d$ and whose zero set inside $\dB$ is 2-dimensional. The proofs are based on the fact that the following two linear maps of $D$ into $H^2_d$ are bounded and bounded below (see Lemmas \twooneandtwofive of \cite{CyclicWeighedBesov_I}):
$$ V_1f(z)=f(3^{3/2}z_1z_2z_3), V_2f(z)=f(z_1^2+z_2^2+z_3^2).$$
 Thus, the fact that $\log(1+\log\frac{2}{1-z})\in D$ implies that $\log(1+\log\frac{2}{p_j}) \in H^2_d$ for $j=1,2$.
\section{Checking for $N^+$ by checking membership in $\HH$}
We have seen that in some cases one can verify the hypothesis of Theorem \ref{thm1} for a function $f$ with bounded argument and $\|f\|_\infty\le 1$ by showing that $F_n=G_n\circ \log(1/f)\in \HH$ for some $n\in \N$. That raises the following question:

\begin{ques} If $\HH$ is a radially weighted Besov space that is also a complete Pick space, and if $F\in \HH$ with $\mathrm{Re }F\ge 0$, then is $\log (1+F)\in \HH$?
\end{ques}
Thus, if this were true, then $G_n\circ \log(1/f)\in \HH$ would imply $G_{n+1}\circ \log(1/f)\in \HH$.

If $\HH=B^N_\om$ for some $N\in \N$, and if in addition to the hypothesis of the question we have that $\frac{1}{1+F}\in \Mult(B^{N-1}_\om)$, then the answer is yes. That is because $R\log(1+F)=\frac{RF}{1+F}\in B^{N-1}_\om$ since $RF\in B^{N-1}_\om$. Hence $\log(1+F)\in B^N_\om$. That will happen if $N=1$, and it will also happen for any $N$ if $F\in B^N_\om\cap \Mult(B^{N-1}_\om)$, because then the hypothesis along with Theorem \onethree of \cite{CyclicWeighedBesov_I}  implies that $\frac{1}{1+F}\in \Mult(B^{N-1}_\om)$.

Of course, if the $F$ arises in an application of Theorem \ref{thm1}, then $F$ is bounded, if and only if $f$ is bounded away from 0, and then we already know that $f$ is cyclic by \Theooneeight of \cite{CyclicWeighedBesov_I}. For possibly unbounded functions we have only been able to show the following Theorem.

For $\gamma>0$ let $\HH_\gamma$ be the Hilbert function space with reproducing kernel $(1-\la z, w\ra)^{-\gamma}$. Then $\HH_1=H^2_d$ and  $f \in \HH_\gamma$ if and only if $R^m f \in \HH_{\gamma+2m}$ and these are radially weighted Besov spaces, see e.g. \cite{RiSunkesHankel}. Let $\HB$ denote the Bloch space on the ball,
$$\HB=\{f \in \Hol(\Bd): \sup_{z\in \Bd} |Rf(z)|(1-|z|)<\infty\}.$$

\begin{theorem} If  $\gamma>0$  and if $F\in \HH_\gamma \cap \HB$ with Re$F>0$, then $\log(1+F)\in \HH_\gamma$\end{theorem}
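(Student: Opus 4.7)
The plan is to verify $\log(1+F)\in\HH_\gamma$ via the derivative characterization. I would choose $m\in\N$ with $\gamma+2m>d$; since $f\in\HH_\gamma$ iff $R^mf\in\HH_{\gamma+2m}$, and $\HH_{\gamma+2m}$ admits the weighted-Bergman representation $\|h\|^2_{\HH_{\gamma+2m}}\asymp\int_\Bd |h|^2(1-|z|^2)^{\gamma+2m-d-1}\,dV$, it suffices to show that $R^m\log(1+F)$ lies in this weighted $L^2$ space. I would expand $R^m\log(1+F)$ via the recursion
$$(1+F)\,R^m\log(1+F)=R^mF-\sum_{k=1}^{m-1}\binom{m-1}{k}(R^kF)\,R^{m-k}\log(1+F),$$
obtained by applying $R^{m-1}$ and Leibniz to the basic identity $(1+F)R\log(1+F)=RF$.

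Two pointwise inputs then drive the estimates. First, $\mathrm{Re}\,F>0$ gives $|1+F|\geq 1$. Second, $F\in\HB$ gives $|R^kF(z)|\leq C_k(1-|z|)^{-k}$ for every $k\geq 1$ (by Cauchy-type estimates for Bloch functions); the same Bloch-type bounds hold for $\log(1+F)$, since a direct computation shows $|R\log(1+F)|(1-|z|)\leq\|F\|_\HB$ and hence $\log(1+F)\in\HB$. For each cross term $(R^kF)\,R^{m-k}\log(1+F)$ in the recursion I would keep one factor in $L^2$ and bound the other by its Bloch estimate. For $k>(d-\gamma)/2$, the $L^2$ role naturally goes to $R^kF$: the resulting integral is comparable to $\|R^kF\|^2_{\HH_{\gamma+2k}}\asymp\|F\|^2_{\HH_\gamma}$ via the Bergman representation of $\HH_{\gamma+2k}$. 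For $k\leq(d-\gamma)/2$, the $L^2$ role falls instead on $R^{m-k}\log(1+F)$, producing a term comparable to $\|R^{m-k}\log(1+F)\|^2_{\HH_{\gamma+2(m-k)}}\asymp\|\log(1+F)\|^2_{\HH_\gamma}$, provided $m$ is chosen large enough that $\gamma+2(m-k)>d$ also in this second regime, i.e.\ $m>d-\gamma$. The leading term $R^mF/(1+F)$ contributes $\|F\|^2_{\HH_\gamma}$ directly.

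The main obstacle is closing the circular dependence introduced by the second regime, which is unavoidable whenever $\gamma\leq d-2$. I would resolve it by dilation: first apply the estimate to $F_r(z)=F(rz)$ for $0<r<1$. Because $F_r$ extends holomorphically across $\overline{\Bd}$, $\log(1+F_r)\in\HH_\gamma$ a priori, so the resulting inequality
$$\|\log(1+F_r)\|^2_{\HH_\gamma}\leq C_0\|F\|^2_{\HH_\gamma}+C_1\|\log(1+F_r)\|^2_{\HH_\gamma}$$
is genuinely a finite-to-finite inequality with $C_0,C_1$ independent of $r$ (depending only on $d,\gamma,m$, and the Bloch norms of $F$ and $\log(1+F)$). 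The delicate step—the heart of the argument—is to organize the choice of $m$ and to track the Bloch constants and binomial coefficients in the recursion so that $C_1<1$, after which rearrangement yields a uniform bound $\|\log(1+F_r)\|_{\HH_\gamma}\leq C\|F\|_{\HH_\gamma}$, and passage to the weak limit as $r\to1^-$ completes the proof.
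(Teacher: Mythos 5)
Your reduction to showing $R^m\log(1+F)\in L^2\bigl((1-|z|^2)^{\gamma+2m-d-1}dV\bigr)$ for $\gamma+2m>d$ is the same starting point as the paper's, and your pointwise inputs are all correct: $|1+F|\ge 1$, the Cauchy--Bloch estimates $|R^kF(z)|\le C_k\|F\|_{\HB}(1-|z|)^{-k}$, and $\log(1+F)\in\HB$. The gap is in the closing step. Your Leibniz recursion leaves derivatives of $\log(1+F)$ in the cross terms, and for $1\le k\le (d-\gamma)/2$ you are forced to estimate those terms by $\|\log(1+F_r)\|^2_{\HH_\gamma}$ itself, yielding $\|\log(1+F_r)\|^2_{\HH_\gamma}\le C_0\|F\|^2_{\HH_\gamma}+C_1\|\log(1+F_r)\|^2_{\HH_\gamma}$. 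The dilation device does make this a finite-to-finite inequality with $r$-independent constants, but the absorption requires $C_1<1$, and you offer no argument for that --- nor is one available. The constant $C_1$ is assembled from the binomial coefficients $\binom{m-1}{k}$ (which grow with $m$), the constants $C_k$, the fixed constants in the norm equivalences between $\|h\|_{\HH_\gamma}$ and weighted $L^2$-norms of $R^jh$, and the Bloch norms of $F$ and $\log(1+F)$, which are given data; there is no small parameter anywhere, and enlarging $m$ makes matters worse rather than better. So the self-improvement scheme does not close, and this is a genuine gap, not a technicality.

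The paper avoids the circularity by expanding $R^m\log(1+F)$ with the Faa di Bruno formula instead of a Leibniz recursion: every term has the form $c_\eta(1+F)^{-|\eta|}\prod_{i}(R^iF)^{\eta_i}$ with $\sum_i i\eta_i=m$, so all dependence on $\log(1+F)$ sits in the prefactor $(1+F)^{-|\eta|}$, which is bounded by $1$ since $\mathrm{Re}\,F>0$, while the products $\prod_i(R^iF)^{\eta_i}$ involve only $F$ and lie in $\HH_{\gamma+2m}$ by Lemma 5.2 of \cite{RiSunkesHankel}. Be aware that this last lemma is not obtained by the naive ``one factor in $L^2$, the rest in Bloch'' pairing you propose: for a term like $(RF)^m$ that pairing requires $\gamma+2>d$, which fails for small $\gamma$ and large $d$. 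Fully unwinding your recursion down to first order would reproduce exactly the Faa di Bruno expansion, after which you would still need a substitute for that lemma.
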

\begin{proof}
As on page 2584 of \cite{RiSunkesHankel} we use Faa di Bruno's formula for the nth order derivative of a composition of functions, see \cite{FaadiBrunoRef}.
Let $g(z)= \log (1+z)$ for $z$ in the right half plane, and suppose $\mathrm{Re} F(z) >0$. Then for $k\ge 1$ we have
$$g^{(k)}(z)= \frac{(-1)^{k-1}(k-1)!}{(1+z)^k}.$$
Choose $m\in \N_0$ such that $\gamma+2m> d$, then $\HH_{\gamma+2m}= L^2_a(\om)$ with equivalence of norms, where $d\om=(1-|z|^2)^{\gamma+2m-d-1} dV$, see e.g. \cite{RiSunkesHankel}. Thus, $\log(1+F)\in \HH_\gamma$ if and only if $R^m\log(1+F)\in L^2(\om)$.

Let $A_m$ be the set of all $m$-tuples $\eta=(\eta_1,...,\eta_m)$ of nonnegative integers that satisfy $\sum_{i=1}^M i\eta_i=m$, and write
$$T_\eta(F)= \prod_{i=1}^m (R^iF)^{\eta_i}.$$
Then,  by the Faa di Bruno formula
$$R^m(\log(1+F))= \sum_{\eta\in A_m} \frac{m!}{\eta!} \frac{(-1)^{|\eta|-1}(|\eta|-1)!}{(1+F)^{|\eta|}}\prod_{j=1}^m \left(\frac{1}{j!}\right)^{\eta_j}T_\eta(F).$$

Now Lemma 5.2 of  \cite{RiSunkesHankel}  says that if $F\in \HH_\gamma \cap \HB$, then $T_\eta(F) \in \HH_{\gamma+2m}$ for all $\eta\in A_m$. Since $\mathrm{Re}F>0$ we have that $\frac{1}{1+F}$ is bounded, hence $R^m\log(1+F)\in L^2(\om)$.

\end{proof}
\bibliographystyle{amsplain-nodash} 
\bibliography{cyclicbib2}

\providecommand{\bysame}{\leavevmode\hbox to3em{\hrulefill}\thinspace}
\providecommand{\MR}{\relax\ifhmode\unskip\space\fi MR }
\providecommand{\MRhref}[2]{%
  \href{http://www.ams.org/mathscinet-getitem?mr=#1}{#2}
}
\providecommand{\href}[2]{#2}
\begin{thebibliography}{1}

\bibitem{AHMRUniformSmirnov}
Alexandru Aleman, Michael Hartz, John~E. McCarthy, and Stefan Richter,
  \emph{The common range of co-analytic {T}oeplitz operators on the
  {D}rury-{A}rveson space}, J. Anal. Math., to appear.

\bibitem{AHMRsccps}
Alexandru Aleman, Michael Hartz, John~E. McCarthy, and Stefan Richter,
  \emph{The {S}mirnov class for spaces with the complete {P}ick property}, J.
  Lond. Math. Soc. (2) \textbf{96} (2017), no.~1, 228--242.

\bibitem{AHMRFactor}
Alexandru Aleman, Michael Hartz, John~E. McCarthy, and Stefan Richter,
  \emph{Factorizations induced by complete {N}evanlinna-{P}ick factors}, Adv.
  Math. \textbf{335} (2018), 372--404.

\bibitem{AHMRRadiallyWeightedBesov}
Alexandru Aleman, Michael Hartz, John~E. McCarthy, and Stefan Richter,
  \emph{Radially weighted {B}esov spaces and the {P}ick property}, Analysis of
  operators on function spaces, Trends Math., Birkh\"{a}user/Springer, Cham,
  2019, pp.~29--61.

\bibitem{CyclicWeighedBesov_I}
Alexandru Aleman, Karl-Mikael Perfekt, Stefan Richter, Carl Sundberg, and James
  Sunkes, \emph{Cyclicity in the {D}rury-{A}rveson space and other weighted
  {B}esov spaces}, preprint, arXiv:2301.04994.

\bibitem{BrownShields}
Leon Brown and Allen~L. Shields, \emph{Cyclic vectors in the {D}irichlet
  space}, Trans. Amer. Math. Soc. \textbf{285} (1984), no.~1, 269--303.

\bibitem{FaadiBrunoRef}
Warren~P. Johnson, \emph{The curious history of {F}a\`a di {B}runo's formula},
  Amer. Math. Monthly \textbf{109} (2002), no.~3, 217--234.

\bibitem{KosinskiVavitsas}
{\L}ukasz Kosi\'{n}ski and Dimitrios Vavitsas, \emph{Cyclic polynomials in
  {D}irichlet-type spaces in the unit ball of $\mathbb{C}^2$}, preprint,
  arXiv:2212.12013v1.

\bibitem{RiSunkesHankel}
Stefan Richter and James Sunkes, \emph{Hankel operators, invariant subspaces,
  and cyclic vectors in the {D}rury-{A}rveson space}, Proc. Amer. Math. Soc.
  \textbf{144} (2016), no.~6, 2575--2586.

\end{thebibliography}

\end{document}